\theoremstyle{plain}
\newtheorem{theorem}{Theorem}[section]
\newtheorem*{theorem*}{Theorem}
\newtheorem{lemma}[theorem]{Lemma}
\newtheorem*{lemma*}{Lemma}
\newtheorem*{korollar*}{Korollar}
\newtheorem{proposition}[theorem]{Proposition}
\newtheorem*{proposition*}{Proposition}
\newtheorem*{satz*}{Satz}
\theoremstyle{definition}
\newtheorem*{definition*}{Definition}
\newtheorem*{beispiel*}{Beispiel}
\newtheorem*{bemerkung*}{Bemerkung}
\newtheorem*{erinnerung*}{Erinnerung}
\newtheorem{remark}{Remark}
\newtheorem*{remark*}{Remark}
\newtheorem*{assumption*}{Assumption}
\renewenvironment{proof}{{\noindent\textsc\proofname}}{\qed}
\DeclareMathOperator\supp{supp}
\newcommand{\T}{\mathbb{T}}
\newcommand{\R}{\mathbb{R}} 	% Reele Zahlen
\newcommand{\Z}{\mathbb{Z}} 	% Ganze Zahlen
\newcommand{\N}{\mathbb{N}} 	% Natürliche Zahlen
\newcommand{\norm}[1]{\left\lVert#1\right\rVert}
\newcommand{\supzr}{\sup \limits_{z \in \R^{n}, R^2<\infty}}
\newcommand{\intrn}{\int \limits_{\mathbb{R}^n}}
\newcommand{\inttn}{\int \limits_{\mathbb{T}^n}}
\newcommand{\grad}{\nabla}
\renewcommand{\div}{\grad\cdot}
\begin{document}

	\title{ A well-posedness result for a system of cross-diffusion equations}

	\author{
		Christian Seis \qquad Dominik Winkler}
	\affil{\textit{Institut f\"ur Analysis und Numerik, Westf\"alische Wilhelms-Universit\"at M\"unster}}
	\date{\today}
	
	\maketitle

\begin{abstract}
	% !TeX spellcheck = en_GB 
	This work’s major intention is the investigation of the well-posedness of  certain cross-diffusion equations 
	in the class of  bounded functions. More precisely, we show existence, uniqueness and stability of bounded weak solutions under the assumption that the system has a dominant linear diffusion. As an application, we provide a new well-posedness theory for a cross-diffusion system that originates from a hopping model with size exclusions.
	Our approach is based on a fixed point argument in a function space that is induced by suitable Carleson-type measures.
\end{abstract}

	% Textteil

\section{Introduction}
% !TeX spellcheck = en_GB 
Systems of partial differential equations with cross-diffusion have developed into a large field of research in the last decades. Cross diffusion, the phenomenon in which the gradient in the concentration of a species causes a flux of another species, appears in various applications as the modelling of population dynamics, e.g.,  \cite{burger2015lane,JungelZamponi16,SHIGESADA197983,articlemultidimensional,CHEN200639} or electrochemistry, e.g., \cite{articleelectrochemical}. Another important biological field that is mathematically described by systems with cross-diffusion are cell-sorting or chemotaxis-like problems, e.g., \cite{articlePainter,articlePainterHillen}. Chemotaxis denotes the process of cell movement provoked by chemical signals. Classical  examples involve pattern formation of bacteria, e.g., \cite{WOODWARD19952181,KELLER1971235}, or biomedical processes as tumour invasion, e.g.,\cite{GERISCH200649,articletumorgrowth}. For more detailed background information regarding the biological and modelling processes we refer the reader to \cite{Murray2002}.

In the present work we study cross-diffusion systems that are dominated by linear diffusion. More precisely, we study a system of diffusion equations that are coupled through nonlinear reaction terms
\begin{align}\label{allgemein}
\partial_t w_i - \Delta w_i = \nabla \cdot F_i(w,\nabla w) \ \ \ \ \ &\text{ in } (0,\infty)\times \T^n,& i=1,\dots,d.
\end{align}
Here, $w_i$ is the mass density, concentration or volumic fraction of the $i$th species---depending on the particular model under consideration. We choose the reaction term in divergence form for mathematical convenience. This way, the evolution is conservative, i.e., the $u_i$'s are preserved over time. If the reaction originates from (nonlinear) drift or diffusion processes in the absence of external forces it can be modelled by  $F_i(w, \nabla w) = \sum_j A_{ij}(w)\nabla w_j$ for some matrices $A(w) = \{A_{ij}(w)\}$. We suppose that the matrix is nonlinear and Lipschitz, in the sense that  
\begin{align}\label{matrixbedingung}
	|A_{ij}(w)| \lesssim |w|^{\mu} \ \ \ \text{ and } \ \ \ |A_{ij}(w)-A_{ij}(v)|\lesssim   \max\{ |w|^{\nu},|v|^{\nu}\} |w-v|, \ \ \ \forall 1\leq i,j \leq d,\end{align}
for some positive real numbers $\mu_i$ and $\nu_i$. For mathematical convenience, we choose to work on the $n$-dimensional torus $\T^n \coloneqq \faktor{\R^n}{\Z^n}$ and neglect thus any boundary effects. Further we equip system \eqref{allgemein} with initial data $h_1,\dots, h_d$.

In its full generality, it is very challenging to study the well-posedness for \eqref{allgemein} without further assumptions. In the present work, our goal is to exploit the particular structure of the nonlinearity, in order to derive a well-posedness result for weak solutions with small initial data.
\begin{theorem}\label{Tallg}
  	For every sufficient small set of initial data $h=(h_1,\dots,h_d)$, there exists a solution $w=(w_1,\dots,w_d)$ to system \eqref{allgemein}. The solution is unique in the class of functions satisfying
  	\begin{align}\label{3}
  	\norm{w}_{L^\infty}+\sup_t \sqrt{t} \norm{\nabla w(t)}_{L^{\infty}} \lesssim \norm{h}_{L^{\infty}}.
  	\end{align}
  	Moreover, if $\tilde w=(\tilde{w}_1,\dots,\tilde{w}_d)$ is another set of solutions with initial data $\tilde h=(\tilde{h}_1,\dots,\tilde{h}_d)$, it holds that
  	\begin{align}\label{11}
  		  \norm{w-\tilde{w}}_{L^{\infty}} \lesssim   \norm{h-\tilde{h}}_{L^{\infty}}.
  	\end{align}
\end{theorem}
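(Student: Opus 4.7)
The plan is a Banach contraction argument applied to the Duhamel formulation of \eqref{allgemein},
\[
\Phi(w)(t) := e^{t\Delta}h + \int_0^t e^{(t-s)\Delta}\nabla\cdot F(w(s),\nabla w(s))\,ds,
\]
set in a Banach space $Y$ tailored to reflect the norm in \eqref{3}. Concretely, I would take
\[
\|w\|_Y := \|w\|_{L^\infty((0,\infty)\times\T^n)} + \sup_{t>0}\sqrt{t}\,\|\nabla w(t)\|_{L^\infty(\T^n)} + \|w\|_{\mathrm{Car}},
\]
where $\|\cdot\|_{\mathrm{Car}}$ is an auxiliary parabolic Carleson-type norm on $\sqrt{t}\,\nabla w$ in the spirit of Koch--Tataru theory; this extra component is inserted in anticipation of the gradient estimate below. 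Working in the closed ball $B_R\subset Y$ of radius $R\sim\|h\|_{L^\infty}$, standard heat-kernel estimates on $\T^n$ give $\|e^{t\Delta}h\|_Y\lesssim\|h\|_{L^\infty}$, which places the linear iterate $\Phi(0)$ inside $B_R$ once $R$ exceeds a fixed multiple of $\|h\|_{L^\infty}$.

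For the self-mapping property $\Phi(B_R)\subset B_R$, I would use the structural bound in \eqref{matrixbedingung} to estimate
\[
|F(w,\nabla w)(s)|\lesssim |w|^\mu|\nabla w|\lesssim R^{\mu+1}s^{-1/2}
\]
whenever $w\in B_R$. Combining this with the heat smoothing $\|e^{(t-s)\Delta}\nabla\cdot g\|_{L^\infty}\lesssim(t-s)^{-1/2}\|g\|_{L^\infty}$ and the Beta-function identity $\int_0^t(t-s)^{-1/2}s^{-1/2}\,ds = \mathrm{B}(1/2,1/2)$ controls the $L^\infty$-part of $\Phi(w)-e^{t\Delta}h$ by $R^{\mu+1}$. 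For the gradient part, I would dyadically split the time integral into $(0,t/2)$ and $(t/2,t)$: on the first interval the bound $\|\nabla e^{(t-s)\Delta}\nabla\cdot g\|_{L^\infty}\lesssim(t-s)^{-1}\|g\|_{L^\infty}$ yields the desired $t^{-1/2}R^{\mu+1}$ estimate, whereas on $(t/2,t)$ the analogous pointwise estimate leads to a logarithmic divergence, and precisely here the Carleson-norm component of $\|\cdot\|_Y$ intervenes to recover a bound through an averaged parabolic quantity. Provided $R$ is small enough, the resulting inequalities yield $\Phi(B_R)\subset B_R$.

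Contractivity and the stability estimate \eqref{11} both stem from the analogous difference bound
\[
|F(w,\nabla w)-F(\tilde w,\nabla\tilde w)| \lesssim \max(|w|,|\tilde w|)^\nu|w-\tilde w||\nabla w| + |\tilde w|^\mu|\nabla(w-\tilde w)|,
\]
supplied by the second part of \eqref{matrixbedingung}. Replaying the estimates above on $\Phi(w)-\Phi(\tilde w)$ and absorbing the $R^{\min(\mu,\nu)}$ factor into the smallness of $R$ produces
\[
\|\Phi(w)-\Phi(\tilde w)\|_Y \leq C\|h-\tilde h\|_{L^\infty} + CR^{\min(\mu,\nu)}\|w-\tilde w\|_Y,
\]
with $CR^{\min(\mu,\nu)}<1$. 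Banach's theorem then supplies the unique fixed point in $B_R$; uniqueness within the larger class characterised by \eqref{3} follows because any solution satisfying \eqref{3} automatically inherits the Carleson bound after adjusting constants, and \eqref{11} is immediate from the above inequality evaluated at the two fixed points. I expect the gradient estimate on $(t/2,t)$ to be the principal obstacle: the naive pointwise $(t-s)^{-1}s^{-1/2}$ bound is not time-integrable, and bypassing this divergence by passing to the Carleson-measure-based space is, in line with the authors' stated methodology, the technical heart of the argument.
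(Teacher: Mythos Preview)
Your proposal shares the paper's overall architecture---a Banach contraction on the Duhamel map in a Carleson-measure space in the Koch--Tataru spirit---but differs in the choice of working norm and in how the linear estimate is closed. The paper does \emph{not} carry the pointwise component $\sup_t\sqrt t\,\|\nabla w(t)\|_{L^\infty}$ in its fixed-point space. It works instead in
\[
\|w\|_{X^p}=\|w\|_{L^\infty}+\sup_{z,R}R\Bigl(\fint_{Q_R(z)}|\nabla w|^p\Bigr)^{1/p},\qquad p>n+2,
\]
and proves the maximal regularity estimate $\|w\|_{X^p}\lesssim\|F\|_{Y^p}+\|h\|_{L^\infty}$ (Proposition~\ref{proposition}) via a diagonal/off-diagonal split: the diagonal piece uses H\"older and the $L^q$ integrability of $\nabla\Phi$ (this is where the restriction $p>n+2$ enters), the off-diagonal piece a dyadic spatial covering with geometric decay, and the Carleson part of the gradient is closed directly through the global $L^p$ maximal regularity $\|\nabla\hat w\|_{L^p}\lesssim\|F\|_{L^p}$ for the heat equation. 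The pointwise bound $\sqrt t\,\|\nabla w(t)\|_{L^\infty}\lesssim\|h\|_{L^\infty}$ is recovered only \emph{a posteriori}, as a consequence of Theorem~\ref{thm2}.

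The practical effect is that the logarithmic divergence you correctly isolate on $(t/2,t)$ never arises: since the paper is not trying to close a pointwise gradient bound inside the contraction, the $(t-s)^{-1}$ kernel singularity is replaced by the benign $L^p$-to-$L^p$ mapping property of $\nabla e^{t\Delta}\nabla\cdot$. Your route---retaining the pointwise gradient in the norm and invoking a Carleson average to handle the near-diagonal contribution---is also viable and closer to the original Koch--Tataru setup, but it requires an additional local smoothing step that you leave unspecified. The paper's version is leaner and yields uniqueness in the strictly larger class $X^p$; your uniqueness argument, by contrast, reaches only the class \eqref{3}.
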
 
In fact, our result is a bit stronger, in the sense that we consider a class of functions that is slightly larger than \eqref{3}. The corresponding function space is defined via suitable Carleson measures or, a little more accurate,  via $L^{\infty}$ norms of certain Hardy--Littlewood maximal functions. We will discuss these spaces and their orgin later in Section \ref{section1.2}.
A more detailed version of Theorem \ref{Tallg} will be given in Theorem \ref{thm1}. Moreover, we will see that our solutions $w$ are of class $C^m$, class $C^{\infty}$ or analytic if $F(w,\nabla w)$ is of the according class $C^m$, $C^{\infty}$ or $C^{\omega}$ as well. Estimates analogous to the gradient estimate \eqref{3} hold true also for any derivatives in time and space,
\[
\sup_{t,x}\,  {t }^{k + \frac{|\beta|}2} |\partial_t^k\partial_x^{\beta} w(t,x)| \lesssim \norm{h}_{L^{\infty}},
\]
for any $k\in \N_0$ and any $\beta\in \N_0^n$ such that the derivatives exist,  see Theorem \ref{thm2}. 
\begin{remark*}
	In this work we write $x \lesssim y$, if the inequality only holds true up to a positive constant $C< \infty$. For the arguments used, the precise values of these constants are irrelevant.
\end{remark*}

 The reason, why we choose to work in the setting of bounded functions is particularly motivated by the following specific example, which apparently belongs to the class of cross-diffusion systems modelled in \eqref{allgemein}, \eqref{matrixbedingung}. We study a  cross-diffusion system that can be modelled by a multi-dimensional advection-diffusion equation with linear drift and diffusion matrices, 
  \begin{align}\label{ursprungssystem}
 	\partial_t u_i = \nabla \cdot \Big[\sum \limits_{j=1, j \neq i}^d K_{ij}(u_j\nabla u_i - u_i \nabla u_j)\Big] \ \ \ \ \ &\text{ in } (0,\infty)\times \T^n,& i=1,\dots,d.
 \end{align}
This system describes the evolution of $d$ different species, and $u_i(t,x)$ plays the role of the density or volumic fraction of the $i$th species at time $t$ and point $x$. The $K_{ij}$'s are the cross-diffusion coefficients, which relate the gradient of the $j$th species' concentration with the flux of the $i$th species' concentration. 
To illustrate the structure of \eqref{ursprungssystem}, we note that the evolution of the $i$th species can be rewritten as the linear conservative advection-diffusion equation
\[
\partial_t u_i + \div(b u_i) = \div(a\grad u_i),
\]
in which the diffusion coefficient $a$ is proportional to the concentration of the concurrent species, while the advecting velocity field $b$ is linearly dependent on  their concentration gradients. The system can be derived  as a formal limit from a hopping model with size exclusion, see \cite{articlehopmodel}.  It   was recently studied mathematically in    \cite{articlepietschmann}.

   Since the solution $u_i$ for $i=1,\dots,d$ represents the volumic fraction of the $i$th species, it is reasonable to  demand the solutions to partition unity,
 \begin{align}\label{bedingungsumme}
 \forall 1\leq i \leq d,\ \ \ 	u_i(t,x) \geq 0 \ \ \ \text{ and }   \ \ \ 	\sum \limits_{i=1}^d u_i(t,x)=1 \ \ \ \text{ in } (0,\infty) \times \T^n.
 \end{align} 
 The same condition has thus to be satisfied by the initial data $g= (g_1,\dots,g_d)$, that is,
  \begin{align}\label{bedingungang}
 	\forall 1\leq i \leq d,\ \ \ 	g_i(x) \geq 0 \ \ \ \text{ and }   \ \ \ 	\sum \limits_{i=1}^d g_i(x)=1 \ \ \ \text{ in }\T^n.
 \end{align}
In order to ensure that the partiton condition in \eqref{bedingungsumme} is satisfied, even on a formal level, it is necessary to impose that the diffusion coefficients are symmetric in the sense that
\begin{equation}\label{2}
K_{ij} = K_{ji}\quad\mbox{for all }1\leq i \neq j\leq d.
\end{equation}

Even for this specific model, proving uniqueness and pointwise bounds as in \eqref{bedingungsumme} is rather challenging. In the following, inspired by \cite{articlepietschmann}, we will restrict our attention to the case, in which the cross-diffusion coefficients $K_{ij}$ satisfy certain closeness assumptions. This way, despite the constraint in \eqref{bedingungsumme}, we are in a situation in which our system under consideration is equivalent to  that in \eqref{allgemein}, \eqref{matrixbedingung}, and thus Theorem \ref{Tallg} applies.

To be more specific, thanks to the partition condition in \eqref{bedingungsumme}, we can elegantly generate a linear diffusion term in \eqref{ursprungssystem},
 \begin{align}\label{pietschmannsystem}
 	\partial_t u_i - K \Delta u_i = \nabla \cdot \Big[ \sum \limits_{j=1, j \neq i}^d( K_{ij}-K)(u_j \nabla u_i - u_i \nabla u_j) \Big] &\text{ in } (0,\infty)\times \T^n,& i=1,\dots,d,
 \end{align}
for any positive constant $K$. In order to treat the right-hand side as a perturbation, we have to assume that the coefficients are sufficiently close  to each other. This is achieved, for instance, by choosing 
\[
K \coloneqq \frac{1}{2} \big(  \max \limits_{1\leq i \neq j \leq d} K_{ij} +  \min \limits_{1\leq i \neq j \leq d} K_{ij}\big),
\]
and demanding that 
\begin{equation}\label{1}
\max \limits_{1\leq i \neq j \leq d} |K_{ij}-K|\ll K/d.
\end{equation}
This assumption enables us to translate \eqref{ursprungssystem} or \eqref{pietschmannsystem} into a diffusion-dominant system, see Section \ref{section1.2}.

Theorem \ref{Tallg} provides us, due to scaling argument, with  a unique solution to \eqref{pietschmannsystem} in the class of functions satisfying
\begin{equation}\label{10}
	\|u\|_{L^{\infty}}+ \sup_t \sqrt{Kt} \norm{\nabla u(t)}_{L^\infty} \lesssim 1.
\end{equation}
In fact, we will see that this system can be transferred back into the original cross-diffusion system \eqref{ursprungssystem}, \eqref{bedingungsumme}. We thus have the following well-posedness result.

\begin{theorem}
\label{T1}
Suppose that the coefficients $K_{ij}$ are symmetric and sufficiently close  to each other in the sense of \eqref{1} and \eqref{2}. Then, for every set of initial data $g_1,\dots,g_d $ satisfying \eqref{bedingungang}, there exists a  smooth solution  $u_1,\dots,u_d$ to the cross-diffusion system \eqref{ursprungssystem}, \eqref{bedingungsumme}. This solution is unique in the class of functions satisfying \eqref{10}. Moreover, solutions are stable in the sense of \eqref{11}.
\end{theorem}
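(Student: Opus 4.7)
The plan is to deduce Theorem~\ref{T1} from Theorem~\ref{Tallg} and then verify that the unique solution thereby produced respects the partition-of-unity constraint~\eqref{bedingungsumme}. The first step is to rewrite~\eqref{ursprungssystem} in the equivalent form~\eqref{pietschmannsystem} with $K\coloneqq\tfrac12(\max K_{ij}+\min K_{ij})$, and then rescale time by $\tilde t=Kt$ so that the linear part becomes $\partial_{\tilde t}-\Delta$. The right-hand side then takes the structural form $\nabla\cdot F_i(u,\nabla u)$ with $F_i(u,\nabla u)=\sum_j A_{ij}(u)\nabla u_j$, where $A_{ii}(u)=\sum_{j\ne i}(K_{ij}-K)u_j/K$ and $A_{ij}(u)=-(K_{ij}-K)u_i/K$ for $j\ne i$. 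This matrix is linear in $u$ with coefficients bounded by $\varepsilon\coloneqq\max_{i\ne j}|K_{ij}-K|/K$, so~\eqref{matrixbedingung} holds with $\mu=1$, $\nu=0$ and a uniformly small prefactor. Combined with $\|g\|_{L^\infty}\le 1$ from~\eqref{bedingungang} and the smallness~\eqref{1}, this meets the hypothesis of Theorem~\ref{Tallg} (or, more precisely, its refinement Theorem~\ref{thm1}); a unique solution $u$ in the class~\eqref{10} follows, which Theorem~\ref{thm2} upgrades to a smooth solution.

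Next, to verify $\sum_i u_i\equiv 1$, I would sum~\eqref{pietschmannsystem} over $i$. A quick relabelling shows that the coefficient of $u_a\nabla u_b$ in the resulting flux equals $(K_{ba}-K)-(K_{ab}-K)$, which vanishes by the symmetry~\eqref{2}, so the entire nonlinear flux is zero. Hence $\sigma\coloneqq\sum_i u_i$ solves the homogeneous heat equation on $\T^n$ with $\sigma(0)\equiv 1$, which forces $\sigma\equiv 1$.

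Third, for non-negativity I would regard the $i$th equation as the linear parabolic equation $\partial_t u_i=\nabla\cdot(\alpha_i\nabla u_i)-\nabla\cdot(u_i\mathbf b_i)$ with $\alpha_i\coloneqq\sum_{j\ne i}K_{ij}u_j$ and $\mathbf b_i\coloneqq\sum_{j\ne i}K_{ij}\nabla u_j$ treated as given smooth coefficients, which is legitimate once $u$ is smooth. A continuity argument then closes the loop: let $t^*$ be the first time some $u_i$ takes a negative value. On $[0,t^*)$ all $u_j\ge 0$, and since~\eqref{1} together with $K>0$ forces each $K_{ij}>0$, the diffusion coefficient $\alpha_i$ is non-negative there; the weak maximum principle yields $u_i\ge 0$ on $[0,t^*)$, contradicting the choice of $t^*$. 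Stability is inherited from~\eqref{11} of Theorem~\ref{Tallg} applied to two solutions obtained in this way.

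The main obstacle I anticipate is the first step: Theorem~\ref{Tallg} is phrased as a smallness condition on the initial data, whereas here $\|g\|_{L^\infty}$ is of order one, so the smallness must instead be routed through the nonlinearity via~\eqref{1}. This relies on the underlying fixed-point argument tracking constants in a product form $\|A\|\cdot\|h\|$ that permits such a trade-off, a bookkeeping which must be made explicit in the refinement Theorem~\ref{thm1}. A secondary subtlety is that the maximum-principle bootstrap requires the smoothness already to be in hand, so the order of invocation of Theorems~\ref{Tallg}, \ref{thm1} and \ref{thm2} has to be arranged with some care, and the $t\downarrow 0$ continuity of $u$ must be used to anchor the bootstrap at the initial non-negativity.
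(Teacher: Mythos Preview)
Your overall strategy---recast as a perturbation of the heat equation, invoke Theorems~\ref{thm1} and~\ref{thm2}, then verify~\eqref{bedingungsumme} a posteriori---matches the paper's. On the smallness issue you flag, your proposed fix (tracking the prefactor $\varepsilon=\max_{i\ne j}|K_{ij}-K|/K$ through the contraction constants) does work. The paper sidesteps this bookkeeping by the change of variables $w_i\coloneqq\delta u_i$, $h_i\coloneqq\delta g_i$ with $\delta\coloneqq\max_{i\ne j}|\delta_{ij}|$, which transfers the smallness from the coefficients to the initial data so that Theorem~\ref{thm1} applies verbatim with $\|h\|_{L^\infty}\le\delta$; the two routes are equivalent, but the rescaling avoids reopening the fixed-point proof. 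Your verification of $\sum_i u_i\equiv 1$ via the symmetry~\eqref{2} is identical to the paper's.

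Your non-negativity argument, however, has a genuine gap. As written it is circular: if $t^*$ is the first time some $u_i$ is negative, then $u_i\ge 0$ on $[0,t^*)$ holds \emph{by definition}, so recovering this from a maximum principle yields no contradiction---you would need to propagate non-negativity \emph{past} $t^*$. But in the formulation you chose, the diffusion coefficient $\alpha_i=\sum_{j\ne i}K_{ij}u_j$ is merely non-negative and degenerates wherever the other species vanish; and even in the non-degenerate rewriting the drift has divergence $\sum_{j\ne i}\delta_{ij}\Delta u_j$ of order $t^{-1}$ as $t\downarrow 0$, so neither the weak maximum principle nor a Gr\"onwall argument anchors cleanly at the initial data. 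The paper resolves this with a truncation device: replace $w_j$ in the nonlinearity by $\hat w_j\coloneqq\max\bigl(0,\min(\delta,w_j)\bigr)$, solve the truncated problem via Theorem~\ref{thm1}, and test the $i$th equation against $w_i^{-}$. The pointwise identities $\hat w_i\,\nabla w_i^{-}=0$ and $w_i^{-}\hat w_i=0$ annihilate the dangerous cross terms outright, leaving
\[
\tfrac12\,\tfrac{d}{dt}\|w_i^{-}\|_{L^2}^2+\int_{\T^n}|\nabla w_i^{-}|^2\Bigl(1+\sum_{j\ne i}\alpha_{ij}\hat w_j\Bigr)\,dx=0,
\]
with the bracket bounded below by $\tfrac12$ thanks to the a priori truncation bound $\hat w_j\in[0,\delta]$ and $|\alpha_{ij}|\le 1$. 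This gives monotonicity of $\|w_i^{-}\|_{L^2}$ from $t=0$ without any singular Gr\"onwall factor, hence $w_i^{-}\equiv 0$; a posteriori the truncation is inactive and the original system is solved.
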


\begin{remark}
We remark that Theorem \ref{T1} (as Theorem \ref{thm1} and Theorem \ref{thm3} below) is valid also for more general classes of cross-diffusion coefficients that vary in space and time, $K_{ij}=K_{ij}(t,x)$, as long as \eqref{1} and \eqref{2} remain true. 
\end{remark}

We note that solutions are automatically bounded thanks to the modelling assumption \eqref{bedingungsumme}, which makes $L^{\infty}$ a natural space for the study of well-posedness. Moreover, the gradient estimate \eqref{3} or \eqref{10} is natural in this perturbative setting \eqref{1}, as it is the standard gradient estimate for the homogeneous heat equation with $L^{\infty}$ data---observe that the control over the gradient deterioates as $t\to 0$ with a rate proportional to the diffusion length. In this sense, we consider the conditions for well-posedness imposed in the present paper as optimal. Our  well-posedness result  for the system under consideration improves upon earlier results which require the solutions and data to be of higher regularity \cite{articlepietschmann}.

 We finally remark that, in general, the analytic treatment of many cross-diffusion problems in the form of
\begin{align}\label{juengelsystem}
\partial_t u - \nabla \cdot \big( a(u)\nabla u\big) = f(u),
\end{align}
can be very challenging, since the diffusion matrix $a(u)$ neither has to be symmetric nor positive definite, which makes it hard to ensure such modelling assumptions as in  (\ref{bedingungsumme}). Another difficulty lies in the absence of a maximum principle or general parabolic regularity theory, if the diffusion matrix is not diagonal. Sufficient conditions for the global existence of weak or strong solutions of nonlinear parabolic equations are obtained, for instance, in \cite{LadyzenskayaSolonnikovUralceva68,Amann89,Pierre10,juengel,ChenJungel19}. The problem of uniqueness is in general much harder. For mildly coupled cross-diffusion equations uniqueness has been proved by  duality methods \cite{Jungel00,DiazGalianoJungel01,MiuraSugiyama14}. In some situations, the structure of the equations also allows for the application of entropy methods \cite{JungelZamponi16,ZamponiJungel17,ChenJungel18}. 
We finally mention results on weak-strong uniqueness in \cite{articlepietschmann,Fischer17,ChenJungel19}.

The paper is organized as follows: In Section \ref{section1.2}, we introduce and discuss the precise function spaces in which we establish well-posedness. Section \ref{linearproblem} is devoted to the study of the linear problem in these spaces. In Section \ref{chapter3} we come back to the nonlinear problem and provide the proofs of the main theorems.

%\item
%\end{itemize}
%An important tool for proving existence results is the concept of entropy, which has recently received a lot of attention,  see, for example, \cite{article01}, \cite{articlehopmodel}, \cite{article03} or \cite{article02}.
%The universal idea, as well as its application to cross-diffusion systems, is explained  in \cite{juengel} in detail.
%In \cite{articleglobalexistence} and \cite{articleentropymethod} more information about the state of the art results for cross-diffusion systems can be found.  While there are more results available for problems consisting of only two species, e.g., \cite{articleglobalexistence}, this work covers arbitrary many species.
%\red{Diesen Absatz noch einmal ueberarbeiten. Literatur sichten. J Fischer?? --- mach ich}

\section{Reformulation and results}\label{section1.2}

The systems that we investigate in this work can be considered as nonlinear perturbations of multi-dimensional heat equations. Moreover,  the particular (semilinear) structure of the nonlinearity considered in \eqref{allgemein}, more precisely, the properties formulated in \eqref{matrixbedingung}, which are in turn motivated my the particular example mentioned in \eqref{ursprungssystem} or \eqref{pietschmannsystem}, lead to the study of bounded solutions to the respective equations in a natural way. Indeed, for any well-behaved norm $\|\cdot\|$ for which we have maximal regularity estimates for the heat equation, we expect that 
\[
\|\grad w\|  \lesssim \|F(w,\grad w)\| + \|h\| \lesssim \||w|^{\mu}\grad w\| + \|h\|\lesssim \|w\|_{L^{\infty}}^{\mu} \|\grad w\|+ \|h\|
\]
by the virtue of \eqref{matrixbedingung}, and the nonlinear term on the right-hand side can be absorbed into the left-hand side provided that $\|w\|_{L^{\infty}}$ is sufficiently small. We are thus led to considering $\|h\| = \|h\|_{L^{\infty}}$ in the case of the initial datum---a choice that is consistent with the partition of unity condition imposed in \eqref{bedingungsumme}, \eqref{bedingungang}. The space-time maximal regularity norm has to be accordingly scale-invariant. Motivated by \cite{KOCH200122}, we use the following (semi-)norms, that are motivated by Carleson-measure characterizations of the BMO space, see Theorem 3 of Chapter 4.4 in \cite{10.2307/j.ctt1bpmb3s}.

Given functions $w:(0,\infty) \times \T^n \rightarrow \R$ and $F: (0,\infty) \times \T^n \rightarrow \R^n$ and $p\in(1,\infty)$, we define
\begin{align*}
& \norm{w}_{X^p}  \coloneqq \| w\|_{L^{\infty} } + \norm{w}_{\dot X^p} ,\qquad 	\norm{w}_{\dot X^p} \coloneqq \sup\limits_{z\in \T^n, R^2<\infty} R \big( \fint \limits_{Q_R(z)} |\nabla w(t,x)|^p dx dt \big)^{\frac{1}{p}} ,\\
&	\norm{F}_{Y^p} \coloneqq \sup \limits_{z \in \T^n, R^2 < \infty} R \big(\fint \limits_{Q_R(z)} |F(t,x)|^pdxdt\big)^{\frac{1}{p}},
\end{align*}
where $Q_R(z) \coloneqq [\frac{R^2}{2},R^2] \times B_R(z) \subseteq \R \times \R^n$. If necessary, we identify $w$ or $F$ with its spatial periodic extension. Based on these norms we define two Banach spaces  $X^p$ and $Y^p$ by
	\begin{align*}
&	X^p\coloneqq \bigl\{ w:(0,\infty) \times \T^n \rightarrow \R \mid  \norm{w}_{X^p} < \infty \bigr\}
	\text{ and }\\
&	Y^p \coloneqq \bigl\{ F : (0,\infty) \times \T^n \rightarrow \R^n \mid \norm{F}_{Y^p} < \infty \bigr\}.
	\end{align*}
The underlying concept of using such norms   was introduced   in \cite{KOCH200122}, in order to prove well-posedness for the Navier-Stokes equations with small initial data in BMO$^{-1}$.
This concept was further developed in order to establish existence and uniqueness results for various (degenerate) parabolic equations, including geometric flows with rough data \cite{koch2012,Wang11}, the porous medium equation \cite{Kienzler16},   the thin film equation \cite{John15,MR3785606}, and the Landau--Lifshitz--Gilbert equation \cite{GutierrezdeLaire19}.

By a slight abuse of notation, we generalize these norms and spaces to vector or matrix valued functions by setting 
\[
\norm{w}_{X^p} \coloneqq \max \limits_{i=1,\dots,d} \norm{w_i}_{X^p},\quad \norm{F}_{Y^p} \coloneqq \max \limits_{i=1,\dots,d} \norm{F_i}_{Y^p},\quad  \norm{h}_{L^{\infty}} \coloneqq \max \limits_{i=1,\dots,d} \norm{h_i}_{L^{\infty}},
\]
 for tuples $w=(w_1,\dots,w_d)$, $F=(F_1,\dots,F_d)$, and $h=(h_1,\dots,h_d)$. 

We are now in the position to present our first result (Theorem \ref{Tallg}) in a more precise manner.

\begin{theorem}\label{thm1}
Suppose that \eqref{matrixbedingung} holds and let $p > n+2$. There exist $\delta_0 > 0$ and $C >0$ such that for every $\delta \leq \delta_0$ and every initial data $h  $ with $\norm{h}_{L^{\infty} }\leq \delta$, there exists a unique solution $w$ to the system \eqref{allgemein} in the class $\norm{w}_{X^p} \leq C\delta$. Moreover, if $\tilde w$ is another solution with initial datum $\tilde h$, it holds that
\begin{equation}\label{12}
\|w-\tilde w\|_{X^p} \lesssim \|h-\tilde h\|_{L^{\infty}}.
\end{equation}
\end{theorem}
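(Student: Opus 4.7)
The plan is to recast the system as a fixed point equation via Duhamel's formula and to close the argument by the Banach contraction principle on a small ball in $X^p$. Letting $S(t) \coloneqq e^{t\Delta}$ denote the heat semigroup on $\T^n$, I rewrite a mild solution of \eqref{allgemein} as a fixed point of
\[
(\Phi w)(t) \coloneqq S(t) h + \int_0^t S(t-s)\, \nabla \cdot F(w(s),\nabla w(s))\, ds,
\]
and I aim to show that for $\delta$ sufficiently small, $\Phi$ is a contractive self-map of the ball $B_{C\delta} \coloneqq \{w \in X^p \mid \|w\|_{X^p} \leq C\delta\}$ for a suitable absolute constant $C$.

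First, I rely on two linear bounds that will be proved in Section \ref{linearproblem}: an $L^\infty$-to-$X^p$ estimate for the initial value problem, $\|S(\cdot) h\|_{X^p} \lesssim \|h\|_{L^\infty}$, and a maximal regularity estimate for the divergence-form inhomogeneity, $\|\int_0^\cdot S(\cdot - s) \nabla \cdot F(s)\, ds\|_{X^p} \lesssim \|F\|_{Y^p}$. Both are compatible with the parabolic scaling $w(t,x) \mapsto w(\lambda^2 t,\lambda x)$ built into the Carleson-type norms; the hypothesis $p > n+2$, where $n+2$ is the parabolic dimension, is natural at this stage so that averaged quantities in the $Y^p$ scale can be controlled by pointwise products on the right-hand side.

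The nonlinear estimates are the algebraic heart of the fixed point. Using the growth bound in \eqref{matrixbedingung}, the trivial inequality $\|fg\|_{Y^p} \leq \|f\|_{L^\infty} \|g\|_{Y^p}$, and the identity $\|\nabla w\|_{Y^p} = \|w\|_{\dot X^p}$ that is immediate from the definitions, I obtain
\[
\|F(w,\nabla w)\|_{Y^p} \lesssim \|w\|_{L^\infty}^\mu \|w\|_{\dot X^p} \leq \|w\|_{X^p}^{1+\mu}.
\]
Splitting $F(w,\nabla w) - F(\tilde w,\nabla \tilde w) = [A(w) - A(\tilde w)]\nabla w + A(\tilde w)\nabla(w-\tilde w)$ and applying the Lipschitz bound in \eqref{matrixbedingung} in the same spirit yields
\[
\|F(w,\nabla w) - F(\tilde w,\nabla \tilde w)\|_{Y^p} \lesssim \bigl(\|w\|_{X^p}^{\nu+1} + \|\tilde w\|_{X^p}^{\nu+1} + \|\tilde w\|_{X^p}^{\mu}\bigr) \|w - \tilde w\|_{X^p}.
\]
Combining with the two linear estimates gives $\|\Phi w\|_{X^p} \leq C_1 \delta + C_2 (C\delta)^{1+\mu}$ and $\|\Phi w - \Phi \tilde w\|_{X^p} \leq C_3 (\delta^\mu + \delta^{\nu+1}) \|w - \tilde w\|_{X^p}$. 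Fixing $C \geq 2C_1$ and then choosing $\delta_0$ small enough makes $\Phi$ a contractive self-map of the closed ball $B_{C\delta}$, which is complete, so Banach's theorem produces the unique fixed point $w$ in that ball.

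Finally, the stability bound \eqref{12} follows from applying the same two linear estimates to the Duhamel identity for $w - \tilde w$ and absorbing the (small) nonlinear contribution into the left-hand side. I expect the main obstacle to be not the fixed-point iteration itself, which is formally standard once the right function spaces are in place, but rather the linear maximal regularity estimate in the Carleson scale $X^p, Y^p$: this is the nontrivial analytic input where the threshold $p > n+2$ and the structure of parabolic Carleson measures genuinely enter, and it is the object of Section \ref{linearproblem}.
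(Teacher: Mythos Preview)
Your proposal is correct and follows essentially the same approach as the paper: define the Duhamel map, invoke the linear $X^p$--$Y^p$ maximal regularity estimate of Section~\ref{linearproblem} (Proposition~\ref{proposition}), combine it with the nonlinear product bounds (the paper packages these as Lemma~\ref{lemmakontraktion}, including a harmless factor of $d$), and close via Banach's fixed point theorem on a ball of radius $C\delta$. The paper derives the stability estimate~\eqref{12} as an immediate by-product of the contraction, exactly as you indicate.
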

Under additional assumptions concerning the nonlinearity $F$, we are able to show higher regularity of the solutions.
\begin{theorem}\label{thm2}
	Let $A$ be of class $C^m$, of class $C^\infty$ or analytic. Then there exist and $\delta_0>0$, maybe even smaller than needed in Theorem \ref{thm1}, such that the dependence of the solution $w$ from Theorem \ref{thm1} on the initial data $h$ is of class $C^m$, class $C^\infty$ or analytic. Further the solution is of class $C^m$, class $C^\infty$ or analytic in time and space. For every $k \in \N_0$ and every multiindex $\beta \in \N_0^n$ such that the derivative exists, it holds that	\begin{align}\label{7}
	\sup_{t,x} t^{k+\frac{|\beta|}2}\big|\partial_t^k\partial_x^{\beta}w(t,x)\big| \lesssim \norm{h}_{L^{\infty} }.
	\end{align}
	In the analytic case there exist constants $\Gamma >0$ and $C>0$ independent of $k$ and $\beta$ such that
	\begin{align}\label{8}
	\sup_{t,x}t^{k+\frac{|\beta|}2}\big|\partial_t^k\partial_x^{\beta}w(t,x)\big| \leq C \Gamma^{l+|\beta|}k!\beta!\norm{h}_{L^{\infty} }.
	\end{align}
	for every $k \in \N_0$ and every multiindex $\beta \in \N_0^n$.
\end{theorem}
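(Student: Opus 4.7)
The plan is to upgrade the contraction mapping argument underlying Theorem \ref{thm1} by exploiting parabolic smoothing together with, in the analytic case, a careful induction on the derivative order.

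For the regularity of $h \mapsto w(h)$, I would invoke the implicit function theorem. The fixed-point equation from Theorem \ref{thm1} has the form $w = \Phi(w,h)$, where $\Phi$ consists of convolution with the heat kernel applied to $h$ plus a Duhamel term involving the composition $F(w,\nabla w)=A(w)\nabla w$. Since $\Phi(\cdot,h)$ is a strict contraction in a ball of $X^p$, the operator $\mathrm{id} - D_w\Phi$ is boundedly invertible at the fixed point. If $A$ is of class $C^m$, $C^\infty$ or real analytic, then $\Phi$ inherits this regularity as a nonlinear map between the relevant Banach spaces, and the corresponding version of the implicit function theorem---the analytic one being the Banach-space analytic implicit function theorem---delivers the claimed regularity of $h \mapsto w(h)$.

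For the joint $(t,x)$-regularity and the derivative bound \eqref{7}, I would bootstrap via parabolic self-similarity. For any $(t_0,x_0)$ with $t_0>0$, the rescaled function $\tilde w(s,y):= w(t_0+t_0 s,\, x_0+\sqrt{t_0}\,y)$ solves a system of the same type on a unit parabolic neighbourhood of the origin, and Theorem \ref{thm1} provides an $L^\infty$-bound on $\tilde w$ together with control on $\nabla\tilde w$ in $X^p$. Standard interior parabolic regularity for the linear heat equation, together with the smoothness of $A$, then yields $|\partial_t^k\partial_x^\beta\tilde w(0,0)|\lesssim \|h\|_{L^\infty}$, and reverting the scaling gives exactly \eqref{7}. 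Global $C^m$, $C^\infty$, or real analyticity of $w$ on $(0,\infty)\times\T^n$ follows since this reasoning applies at every $(t_0,x_0)$. Time derivatives may be traded for spatial ones throughout via the equation $\partial_t w=\Delta w+\nabla\cdot(A(w)\nabla w)$.

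For the analytic estimate \eqref{8} the constants in the previous step must be tracked quantitatively. I would set $c_\alpha := \sup_{t,x} t^{|\alpha|/2} |\partial_x^\alpha w(t,x)|$, reduce to purely spatial derivatives by repeatedly substituting the equation, apply $\partial_x^\alpha$ to the fixed-point identity $w=\Phi(w,h)$, and expand $A(w)$ by Fa\`a di Bruno's formula. This produces a recursive inequality of Cauchy-product type for the $c_\alpha$. Introducing a majorant sequence $a_\alpha$ whose generating function satisfies an algebraic equation solvable in closed form (in the spirit of the classical Cauchy--Kowalewski theorem) and comparing termwise yields $c_\alpha \le C\Gamma^{|\alpha|}\alpha!\,\|h\|_{L^\infty}$, which, after re-expressing the time derivatives through the equation, converts to \eqref{8} with possibly enlarged $C$ and $\Gamma$.

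The main obstacle I anticipate is the combinatorial bookkeeping in this last step: aligning the factorials $k!\,\beta!$ with the partition sums generated by Fa\`a di Bruno applied to $A(w)$, while simultaneously preserving the correct parabolic weights $t^{k+|\beta|/2}$ and absorbing the multiplicative constants inherited from the contraction estimates of Theorem \ref{thm1}. Once the recursion is formulated cleanly, the geometric growth is a standard consequence of the majorant method.
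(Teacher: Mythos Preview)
Your implicit-function-theorem argument for the regularity of $h\mapsto w(h)$ matches the paper's exactly. For the remaining two parts, however, the paper takes a different and considerably shorter route: it uses \emph{Angenent's trick}. One introduces the two-parameter family $\Psi_{s,a}(t,x)=(st,\,x+\sqrt{t}\,a)$ and observes that $w\circ\Psi_{s,a}$ solves a system of the same type with a modified nonlinearity $F_{s,a}$ that reduces to $F$ at $(s,a)=(1,0)$. Applying the implicit function theorem once more, now with $(s,a)$ as additional parameters, gives that $(s,a)\mapsto w\circ\Psi_{s,a}$ is $C^m$, $C^\infty$ or analytic from a neighbourhood of $(1,0)$ into $X^p$. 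The chain rule then yields
\[
\partial_s^k\partial_a^\beta\big|_{(s,a)=(1,0)}\,w\circ\Psi_{s,a}(t,x)=t^{k+|\beta|/2}\,\partial_t^k\partial_x^\beta w(t,x),
\]
which is exactly \eqref{7}, and in the analytic case the factorial bound \eqref{8} follows immediately from the standard Cauchy estimate for a Banach-space-valued analytic function of $(s,a)$. Your bootstrapping-plus-majorant strategy is not wrong---interior parabolic estimates do give \eqref{7}, and a Fa\`a di Bruno recursion can in principle be closed---but the combinatorial bookkeeping you flag as the main obstacle is entirely avoided by Angenent's trick, which packages both the smoothness and the analytic growth rate into one clean parameter-differentiation step.
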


We finally turn to the explicit system given in \eqref{ursprungssystem}, \eqref{bedingungsumme} and show how it fits into the general framework considered in Theorems \ref{thm1} and \ref{thm2}. We have already seen that under the partition condition in \eqref{bedingungsumme}, \eqref{ursprungssystem} is equivalent to \eqref{pietschmannsystem}.
Our goal is to transfer the latter into a diffusion-dominated system with small initial data. By rescaling time, the diffusivity constant on the left-hand side can be absorbed into the cross-diffusion coefficients, that is, we consider 
\begin{align}\label{pertubation}
\partial_t u_i - \Delta u_i = \nabla \cdot \Big[ \sum \limits_{j=1, j \neq i}^d \delta_{ij}(u_j \nabla u_i - u_i \nabla u_j) \Big]&\text{ in } (0,\infty)\times \T^n,& i=1,\dots,d,
\end{align}
with coefficients $\delta_{ij} \coloneqq \frac{K_{ij}}{K}-1$. At this point we note that the scaling factor $K$ has to be positive. 
The closeness condition \eqref{1} now translates into the smallness condition $\delta \coloneqq \max \limits_{1\leq i \neq j \leq d} |\delta_{ij}| \ll \frac{1}{d}$ on the new coefficients. We now use the nonlinearity of the equation to shift the smallness condition further to the initial datum. This is achieved by setting $w_i\coloneqq\delta u_i$ and $h_i\coloneqq\delta g_i$. The new  partition conditions are thus
\begin{align}\label{bedingungsummeW}
\forall 1\leq i \leq d,\ \ \ 	w_i(t,x) \geq 0 \ \ \ \text{ and }   \ \ \ 	\sum \limits_{i=1}^d w_i(t,x)=\delta \ \ \ \text{ in } (0,\infty) \times \T^n,
\end{align}
and
\begin{align}\label{bedingunganh}
\forall 1\leq i \leq d,\ \ \ 	h_i(x) \geq 0 \ \ \ \text{ and }   \ \ \ 	\sum \limits_{i=1}^d h_i(x)=\delta \ \ \ \text{ in }\T^n,
\end{align}
and the cross-diffusion equations become
\begin{align}\label{bewiesenessystem}
\begin{cases}
\partial_t w_i - \Delta w_i = \nabla \cdot \Big[ \sum \limits_{j=1, j \neq i}^d \alpha_{ij} (w_j\nabla w_i - w_i \nabla w_j)\Big] &  \text{ in } \left(0,\infty\right) \times \T^n,\\
w_i(0,\cdot)=h_i & \text{ in } \T^n,
\end{cases} \qquad & i = 1,\dots d,
\end{align}
where the $\alpha_{ij}$'s are given by $\frac{\delta_{ij}}{\delta}$, and are thus bounded, $|\alpha_{ij}|\leq 1$.

\begin{remark*}
	To be accurate, we have to exclude the case $K_{ij} \equiv K$. Since we would obtain $\delta =0$, the change of variables would not be permitted. However, this is not a significant restriction, because the the cross-diffusion system would untangle into a system of $d$ independent heat equations, which is much easier to solve.
\end{remark*} 

We see that \eqref{bewiesenessystem} has the same structure as our general model \eqref{allgemein}, where $A(w)$ is given by
\begin{align*}
	A_{ij}(w)=\begin{cases} \sum \limits_{k=1, j \neq i}^d \alpha_{ik}w_k &\mbox{if }i=j,\\
	  -\alpha_{ij}w_i &\mbox{if }i\not=j.
	 \end{cases}
\end{align*}
Apparently, \eqref{bedingunganh} provides an upper bound for the initial data and the nonlinearity $A(w)$ depends analytically on $w$. We are thus allowed to apply Theorems \ref{thm1} and  \ref{thm2} and obtain well-posedness for \eqref{bewiesenessystem} in the class $X^p$ together with analyticity in time and space and analytic dependence on the initial data. It only remains to verify that solutions obey the partition condition \eqref{bedingungsummeW}, the argument of which we provide in Section \ref{chapter3}, following \cite{articlepietschmann}. Our result for the cross-diffusion system \eqref{ursprungssystem}, \eqref{bedingungsumme}, or equivalently, \eqref{bewiesenessystem}, \eqref{bedingungsummeW},  is thus the following.

\begin{theorem}\label{thm3}
Suppose that the coefficients $K_{ij}$ are symmetric   in the sense of \eqref{1}. Let  $p > n+2$ be given. There exist $\delta_0 > 0$ and $C >0$ such that for every $\delta \leq \delta_0$ and every $h \in L^{\infty}$ with \eqref{bedingunganh}, there exists a unique solution $w$ to the system (\ref{bewiesenessystem}),  (\ref{bedingungsummeW}) in the class $\norm{w}_{X^p} \leq C\delta$. Moreover, the solution depends analytically on time, space and the initial data, and estimates \eqref{8} and \eqref{12} hold.
\end{theorem}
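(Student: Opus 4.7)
The plan is to reduce Theorem \ref{thm3} to Theorems \ref{thm1} and \ref{thm2} applied to \eqref{bewiesenessystem}, and then to verify the partition condition \eqref{bedingungsummeW} separately.

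First I would check that the matrix $A(w)$ displayed just before Theorem \ref{thm3} fits the framework of Theorem \ref{thm1}: each entry is linear in $w$ with coefficients of modulus $|\alpha_{ij}|\leq 1$, so \eqref{matrixbedingung} holds with $\mu=1$ and $\nu=0$, and $A$ is analytic in $w$. Because \eqref{bedingunganh} enforces $\|h\|_{L^{\infty}}\leq \delta$, Theorem \ref{thm1} provides a unique solution $w\in X^p$ with $\|w\|_{X^p}\leq C\delta$ and the stability bound \eqref{12}, while Theorem \ref{thm2} upgrades it to be jointly analytic in $(t,x)$ and in the initial data, with the derivative estimates \eqref{8}.

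Next I would establish the sum condition $\sum_i w_i\equiv \delta$. Summing the $d$ equations in \eqref{bewiesenessystem}, the nonlinear flux reads
\[
\sum_{i}\sum_{j\neq i}\alpha_{ij}\bigl(w_j\nabla w_i-w_i\nabla w_j\bigr),
\]
and pairing the indices $(i,j)$ and $(j,i)$ gives $\alpha_{ij}(w_j\nabla w_i-w_i\nabla w_j)+\alpha_{ji}(w_i\nabla w_j-w_j\nabla w_i)=0$ by the symmetry $\alpha_{ij}=\alpha_{ji}$ inherited from \eqref{2}. Hence $\sigma:=\sum_i w_i$ solves the pure heat equation on $\T^n$ with initial datum $\sum_i h_i\equiv\delta$, and uniqueness for the heat equation forces $\sigma\equiv\delta$.

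The main obstacle is the pointwise positivity $w_i\geq 0$. For this I would recast the $i$-th equation as
\[
\partial_t w_i-\nabla\cdot\bigl(a_i\nabla w_i\bigr)+\nabla\cdot\bigl(w_i b_i\bigr)=0,\qquad a_i:=1+\sum_{j\neq i}\alpha_{ij}w_j,\quad b_i:=\sum_{j\neq i}\alpha_{ij}\nabla w_j.
\]
The bound $\|w_j\|_{L^{\infty}}\leq C\delta$ from membership in $X^p$ yields $a_i\geq 1-(d-1)C\delta\geq 1/2$ after shrinking $\delta_0$ if necessary, so the operator is uniformly parabolic; moreover \eqref{8} ensures that $b_i$ is smooth on $(0,\infty)\times\T^n$ with at worst a $1/\sqrt{t}$ singularity at $t=0$, which is harmless for parabolic comparison. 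A standard weak minimum principle, applied either after a mollification of the initial datum by $h_i+\varepsilon$ and passage to the limit, or directly by testing the equation with $(w_i)_-$ and using Gr\"onwall's inequality against the integrable drift, then yields $w_i\geq 0$ globally, following the strategy of \cite{articlepietschmann}. Combined with the sum condition this establishes \eqref{bedingungsummeW} and completes the proof.
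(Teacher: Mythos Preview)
Your overall strategy---apply Theorems \ref{thm1} and \ref{thm2} to the system \eqref{bewiesenessystem}, then verify \eqref{bedingungsummeW} a posteriori---is exactly the route the paper takes, and your argument for the sum condition $\sum_i w_i\equiv\delta$ via the symmetry $\alpha_{ij}=\alpha_{ji}$ coincides with the paper's.

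The genuine difference, and the place where your proposal has a gap, is the positivity step. The paper does \emph{not} work directly with \eqref{bewiesenessystem}; it first passes to the truncated system in which each $w_j$ in the nonlinearity is replaced by $\hat w_j:=\max\bigl(0,\min(\delta,w_j)\bigr)$, applies Theorem \ref{thm1} to that system, and only afterwards identifies the truncated solution with a solution of the original one. The reason is that the truncation makes the energy argument for $w_i\ge 0$ close cleanly: when one tests the $i$th equation with $w_i^-$, the drift contribution involves the products $\hat w_i\,\nabla w_i^-$ and $w_i^-\,\hat w_i$, both of which vanish \emph{identically} because $\hat w_i=0$ on $\{w_i<0\}$. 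One is left with
\[
\frac12\frac{d}{dt}\|w_i^-\|_{L^2}^2+\int_{\T^n}\Bigl(1+\sum_{j\ne i}\alpha_{ij}\hat w_j\Bigr)|\nabla w_i^-|^2\,dx=0,
\]
and positivity follows at once from $1+\sum_{j\ne i}\alpha_{ij}\hat w_j\ge \tfrac12$.

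Without the truncation, your rewriting produces a drift $b_i=\sum_{j\ne i}\alpha_{ij}\nabla w_j$ that, by the very estimates you invoke, satisfies only $\|b_i(t)\|_{L^\infty}\lesssim \delta/\sqrt t$. Testing with $(w_i)_-$ and absorbing the gradient term by Young's inequality then leaves a Gr\"onwall factor of order $\delta^2/t$, which is \emph{not} integrable at $t=0$; so the phrase ``integrable drift'' is not justified. The alternative you sketch---mollify to $h_i+\varepsilon$ and appeal to a classical comparison principle---would require smoothness of the solution (and hence boundedness of $b_i$) up to $t=0$, which the $X^p$ theory of the paper does not provide. Either route can presumably be made rigorous with additional work, but as written the positivity argument is incomplete; the truncation device from \cite{articlepietschmann} is precisely the missing idea.
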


\section{Linear theory}\label{linearproblem}
% !TeX spellcheck = en_GB 
Our proof of  Theorem \ref{thm1} is based on a fixed point argument. We will thus start with the study of the linear problem.
Our goal in this section is the following maximal regularity estimate.
\begin{proposition}\label{proposition}
	Let $w$ be a solution of the inhomogeneous heat equation
	\begin{align} \label{heateq}
		\begin{cases}
		\partial_t w - \Delta w = \nabla \cdot F & \text{ in } \left(0, \infty\right)\times \T^n\\
		w(0,\cdot)=h & \text{ in } \T^n.
		\end{cases}
	\end{align}
	Then it holds that 
	\[
	\norm{w}_{X^p} \lesssim \norm{F}_{Y^p} + \norm{h}_{L^{\infty}}.
	\]
\end{proposition}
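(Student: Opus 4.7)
The plan is to use linearity of \eqref{heateq} to decompose $w = w_h + w_i$, where $w_h$ is the unique solution of the homogeneous heat equation with initial data $h$ and $w_i$ solves $\partial_t w_i - \Delta w_i = \nabla \cdot F$ with zero initial data. The two pieces are bounded separately in the $X^p$ norm.

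For the homogeneous contribution, the $L^{\infty}$ bound $\norm{w_h}_{L^{\infty}} \leq \norm{h}_{L^{\infty}}$ follows from the nonnegativity and $L^1$-normalization of the heat kernel on $\T^n$ (equivalently, from the maximum principle). For the Carleson seminorm, the standard pointwise gradient estimate $\norm{\nabla e^{t\Delta}h}_{L^{\infty}} \lesssim t^{-1/2}\norm{h}_{L^{\infty}}$, combined with the fact that $t \in [R^2/2, R^2]$ throughout $Q_R(z)$, yields
\[
R \Bigl(\fint_{Q_R(z)} |\nabla w_h(t,x)|^p\, dx\, dt\Bigr)^{1/p} \lesssim R \cdot R^{-1} \norm{h}_{L^{\infty}} = \norm{h}_{L^{\infty}},
\]
so that $\norm{w_h}_{X^p} \lesssim \norm{h}_{L^{\infty}}$ after taking the supremum.

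For $w_i$, I would use Duhamel's formula $w_i(t) = \int_0^t e^{(t-s)\Delta}\nabla\cdot F(s)\,ds$. Fix a test cube $Q_R(z)$ and split $F = F_{\mathrm{near}} + F_{\mathrm{far}}$, where $F_{\mathrm{near}}$ is the restriction of $F$ to the parabolic cylinder $[0, 2R^2] \times B_{2R}(z)$, and correspondingly $w_i = w_{\mathrm{near}} + w_{\mathrm{far}}$. The near contribution is handled by parabolic $L^p$ maximal regularity (parabolic Calder\'on--Zygmund theory for the operator $F \mapsto \nabla(\partial_t - \Delta)^{-1}\nabla\cdot F$), giving $\norm{\nabla w_{\mathrm{near}}}_{L^p} \lesssim \norm{F_{\mathrm{near}}}_{L^p}$; combined with the $Y^p$-normalization and $|Q_R|\sim R^{n+2}$, this produces the Carleson bound on $Q_R(z)$. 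The far contribution is controlled by the pointwise Gaussian estimate $|\nabla_x\nabla_y K_{t-s}(x-y)| \lesssim (t-s)^{-(n+2)/2} e^{-c|x-y|^2/(t-s)}$: I would decompose the spatial integration into dyadic annuli $2^k R \leq |y-z| < 2^{k+1}R$ and apply H\"older on parabolic subcubes at each scale, the Gaussian factor providing fast decay in $k$ that dominates the polynomial growth of the $Y^p$ norm on enlarged cubes. The uniform bound $\norm{w_i}_{L^{\infty}} \lesssim \norm{F}_{Y^p}$ is obtained by an analogous dyadic decomposition of the Duhamel integral, and it is at this point that the hypothesis $p > n+2$ enters, to ensure integrability of the H\"older conjugate of the gradient heat kernel against $Y^p$-averages.

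The main technical obstacle will be the far-field bookkeeping: collecting the dyadic pieces into a geometric series whose sum is independent of $R$ and $z$, while working with the fact that $\norm{F}_{Y^p}$ only gives control over averages on the specific family of cubes $Q_R(z)$ rather than on arbitrary parabolic regions. A secondary subtlety is that the problem is posed on $\T^n$: for $R$ small the torus heat kernel is well approximated by the Euclidean one on the periodic extension and the above arguments transfer directly, whereas for $R$ comparable to or larger than the diameter of $\T^n$ the averaging on $Q_R(z)$ becomes effectively an estimate on the full torus and the bounds simplify.
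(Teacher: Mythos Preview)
Your overall architecture matches the paper's exactly: split $w=\tilde w+\hat w$ into homogeneous and Duhamel parts, handle $\tilde w$ via the maximum principle and the heat-kernel gradient bound, and handle $\hat w$ by a near/far decomposition using $L^p$ maximal regularity for the near piece and pointwise kernel estimates for the far piece. The place where $p>n+2$ enters (H\"older against $\nabla\Phi$ on the diagonal for the $L^\infty$ bound) is also identified correctly.

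There is, however, a real gap in your near/far split for the Carleson part. You take $F_{\mathrm{near}}=F\cdot \1_{[0,2R^2]\times B_{2R}(z)}$ and then invoke maximal regularity to get $\|\nabla w_{\mathrm{near}}\|_{L^p}\lesssim\|F_{\mathrm{near}}\|_{L^p}$, claiming that the $Y^p$-normalization closes the estimate. It does not: the $Y^p$ norm only controls $\int_{Q_{R'}}|F|^p$ on cylinders $Q_{R'}=[R'^2/2,R'^2]\times B_{R'}$, and covering $[0,2R^2]\times B_{2R}(z)$ by such cylinders at dyadic time scales $R_m=R\,2^{-m/2}$ produces a factor $\sum_m 2^{m(p-2)/2}$, which diverges for $p>2$. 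Concretely, $F(s,y)=s^{-1/2+\varepsilon}$ with $0<\varepsilon<\tfrac12-\tfrac1p$ has finite $Y^p$ norm but $\|F\|_{L^p([0,2R^2]\times B_{2R})}=\infty$. The paper avoids this by taking the near region to be $\check Q_R(z)=(R^2/4,R^2)\times B_{2R}(z)$, bounded away from $t=0$; this is covered by finitely many $Q_{R'}$ with $R'\sim R$, and the leftover strip $[0,R^2/4]\times\R^n$ lands in the far piece, where $t-s\gtrsim R^2$ makes the kernel bound $|\nabla^2\Phi(t-s,x-y)|\lesssim R^{-(n+2)}e^{-c|x-y|/R}$ available. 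Your split must be temporal as well as spatial.

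Two smaller remarks. First, for the far piece the paper organizes the summation differently from your spatial dyadic annuli: it uses a spatial lattice $R\cdot\Z^n$ together with a dyadic decomposition in \emph{time}, $[R^2 2^{-(m+1)},R^2 2^{-m}]$, and extracts a geometric factor $2^{-m/2}$ at each step; your annulus scheme also works once the near region is corrected, but you will still need a time decomposition inside each annulus to reduce to $Q_{R'}$-averages. Second, the torus issue is disposed of in the paper simply by periodic extension to $\R^n$ at the outset, after which all estimates are Euclidean; this is cleaner than treating small and large $R$ separately.
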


The argumentation for establishing this proposition is similar to those in \cite{koch2012,Kienzler16,John15,MR3785606}.

It will be convenient to translate the problem onto the full space by extending all involved functions periodically from $\T^n$ to $\R^n$. It is clear that the corresponding norms remain unchanged under periodic extension. 

We denote the heat kernel in $\R^n$ by $\Phi$, i.e., $\Phi(t,x)=(4\pi t)^{-\frac{n}{2}} e^{-\frac{|x|^2}{4t}}$, so that solutions to \eqref{heateq} have the representation
\begin{align*}
w(t,x) &= \int \limits_{\R^n} \Phi(t,x-y)h(y)dy + \int \limits_0^t \int \limits_{\R^n} \grad\Phi(t-s,x-y)\cdot F(s,y)dyds=: \tilde w(t,x) + \hat w(t,x).
\end{align*}
We will estimate the homogeneous part $\tilde w$ and the inhomogeneous part $\hat w$ separately.
Before doing so, we recall a standard estimate on the gradient of the heat kernel.
\begin{lemma} \label{lemma2.1}
For every $p\in[1,\infty]$, it holds that
	\[
		\norm{\nabla \Phi (t, \cdot)}_{L^p(\mathbb{R}^n)} ~ \lesssim ~ t^{-\frac{n}{2}-\frac{1}{2}+\frac{n}{2p}}.
	\]
\end{lemma}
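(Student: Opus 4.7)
The plan is to derive the bound by a direct scaling argument, which is the standard way to obtain parabolic decay estimates for the heat kernel and its derivatives.

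First I would write out the gradient of the heat kernel explicitly. Since $\Phi(t,x) = (4\pi t)^{-n/2} e^{-|x|^2/(4t)}$, differentiating gives
\[
\nabla \Phi(t,x) = -\frac{x}{2t}(4\pi t)^{-n/2} e^{-|x|^2/(4t)},
\]
so $|\nabla \Phi(t,x)| = \frac{|x|}{2t}(4\pi t)^{-n/2} e^{-|x|^2/(4t)}$.

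Next I would exploit the parabolic scaling of the heat kernel through the change of variables $x = \sqrt{t}\, y$, for which $dx = t^{n/2} dy$. Plugging in yields
\[
|\nabla \Phi(t,\sqrt{t} y)| = t^{-\frac{n+1}{2}} \cdot \frac{|y|}{2 (4\pi)^{n/2}} e^{-|y|^2/4}.
\]
For $p \in [1,\infty)$, raising to the $p$-th power, integrating, and inserting the Jacobian factor $t^{n/2}$ gives
\[
\|\nabla \Phi(t,\cdot)\|_{L^p(\mathbb{R}^n)}^p = t^{-\frac{(n+1)p}{2} + \frac{n}{2}} \int_{\mathbb{R}^n} \Bigl(\frac{|y|}{2(4\pi)^{n/2}}\Bigr)^p e^{-p|y|^2/4} dy.
\]
The remaining $y$-integral is a finite constant depending only on $n$ and $p$ (a standard Gaussian moment), so taking $p$-th roots gives exactly $t^{-\frac{n}{2}-\frac{1}{2}+\frac{n}{2p}}$. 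For $p = \infty$ one checks directly that $\sup_x |\nabla \Phi(t,x)| \lesssim t^{-(n+1)/2}$, by differentiating $|y| e^{-|y|^2/4}$ in the scaled variable, which matches the claimed exponent with $n/(2p) = 0$.

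There is no real obstacle here; the only thing to be a bit careful about is keeping track of the powers of $t$ arising from the prefactor $(4\pi t)^{-n/2}$, the factor $1/t$ from differentiating the exponent, and the Jacobian $t^{n/2}$ from the substitution. Collecting them correctly yields the advertised exponent.
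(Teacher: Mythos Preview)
Your proof is correct and follows essentially the same approach as the paper: both compute $|\nabla\Phi|$ explicitly and use the parabolic change of variables $x=\sqrt{t}\,y$ to extract the power of $t$, leaving a finite Gaussian moment. The paper first records the pointwise bound $|\nabla\Phi(t,x)|\lesssim t^{-(n+1)/2}e^{-c|x|^2/t}$ to handle $p=\infty$ and then performs the same scaling computation for $p<\infty$, so the arguments are effectively identical.
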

We provide the simple proof for the convenience of the reader.

\begin{proof}\textsc{.}
	For any $\alpha >0$, the function $y^{\alpha}e^{-\frac{y}{2}}$ is bounded on $[0,\infty)$ and thus 
	\[
	y^{\alpha}e^{-y} = y^{\alpha}e^{-\frac{y}{2}}e^{-\frac{y}{2}} \lesssim e^{-\frac{y}{2}}.
	\]
	Using $y=\frac{|x|^2}{4t}$ and $\alpha = \frac{1}{2}$, we thus obtain the pointwise estimate
	\begin{align*}
		|\nabla \Phi(t,x)| = \frac{1}{(4 \pi t)^{\frac{n}{2}}} \frac{|x|}{2t} e^{-\frac{|x|^2}{4t}} \lesssim \frac{1}{t^{\frac{n+1}{2}}}e^{-c\frac{|x|^2}{t}}.
	\end{align*}
	This proves the case $p=\infty$. For smaller values of $p$, using a chance of variables, we compute
	\begin{align*}
		\norm{\nabla \Phi (t, \cdot)}_{L^p(\mathbb{R}^n)}^{p}  \lesssim t^{-\frac{np}{2}-p} \int \limits_{\R^n}|x|^p e^{-\frac{p|x|^2}{4t}}dx \lesssim t^{-\frac{np}{2}-p+\frac{p}{2}+\frac{n}{2}}\int \limits_{\R^n}|y|^p e^{-\frac{|y|^2}{4}}dy \lesssim t^{-\frac{np}{2}-\frac{p}{2}+\frac{n}{2}}.
	\end{align*}
	This proves the lemma.
\end{proof}

We first turn to the estimate of the solution to the homogeneous problem $\tilde w$. 

\begin{lemma} \label{heateqhomogen}
It holds that $ \norm{\tilde{w}}_{X^p} \lesssim \norm{{h}}_{L^{\infty}}$.
\end{lemma}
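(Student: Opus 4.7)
The plan is to decompose $\|\tilde w\|_{X^p} = \|\tilde w\|_{L^\infty} + \|\tilde w\|_{\dot X^p}$ and bound both pieces directly from the convolution representation $\tilde w(t,x) = \int_{\R^n} \Phi(t,x-y)h(y)\,dy$, exploiting the scaling built into the definition of the cylinders $Q_R(z)$.

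For the $L^\infty$ part, since $\Phi(t,\cdot)$ is a nonnegative probability density on $\R^n$, Jensen's (or just a pointwise) estimate gives $|\tilde w(t,x)| \leq \|h\|_{L^\infty}$ for every $(t,x)$, so $\|\tilde w\|_{L^\infty} \leq \|h\|_{L^\infty}$.

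For the $\dot X^p$ part, I would differentiate under the integral to write $\nabla \tilde w(t,x) = \int_{\R^n} \nabla_x \Phi(t,x-y) h(y)\,dy$, and then invoke Young's inequality (the $L^\infty$-$L^1$ endpoint) together with Lemma \ref{lemma2.1} at $p=1$ to get the pointwise bound
\begin{align*}
\|\nabla \tilde w(t,\cdot)\|_{L^\infty(\R^n)} \leq \|\nabla \Phi(t,\cdot)\|_{L^1(\R^n)}\|h\|_{L^\infty} \lesssim t^{-1/2}\|h\|_{L^\infty}.
\end{align*}
Now on any cylinder $Q_R(z) = [R^2/2,R^2]\times B_R(z)$ we have $t \geq R^2/2$, so $t^{-1/2} \lesssim R^{-1}$, and therefore $|\nabla \tilde w(t,x)| \lesssim R^{-1}\|h\|_{L^\infty}$ uniformly on $Q_R(z)$. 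Plugging this into the $\dot X^p$ supremum yields
\begin{align*}
R\Bigl(\fint_{Q_R(z)}|\nabla \tilde w|^p\,dx\,dt\Bigr)^{1/p} \lesssim R \cdot R^{-1}\|h\|_{L^\infty} = \|h\|_{L^\infty},
\end{align*}
and taking the supremum over $z\in\T^n$ and $R^2<\infty$ gives $\|\tilde w\|_{\dot X^p} \lesssim \|h\|_{L^\infty}$. Adding the two contributions completes the proof.

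There is really no serious obstacle here: the whole argument is a scaling check, and the single mild point worth noting is that the lower bound $t\geq R^2/2$ built into the definition of $Q_R(z)$ is precisely what makes the parabolic scale $t^{-1/2}$ of the gradient heat bound compatible with the Carleson weight $R$, so the $p$-average plays no essential role beyond the trivial $L^\infty \hookrightarrow L^p$ embedding on the cylinder.
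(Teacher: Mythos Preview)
Your proof is correct and follows essentially the same route as the paper's: the $L^\infty$ bound comes from $\Phi(t,\cdot)$ being a probability density (the paper phrases this as the maximum principle), and the $\dot X^p$ bound comes from $\|\nabla\Phi(t,\cdot)\|_{L^1}\lesssim t^{-1/2}$ together with the lower time bound $t\ge R^2/2$ on $Q_R(z)$. The arguments are identical up to wording.
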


\begin{proof}\textsc{.}
The maximum principle for the heat equation immediately implies the bound on the $L^{\infty}$ norm of $\tilde w$.
%	To estimate the $L^{\infty}$-norm of $\tilde{w}$ we calculate
%	\begin{align*}
%	\sup \limits_{x \in \R^n,0<t<\infty} |\tilde{w}(t,x)| &\leq \sup \limits_{x \in \R^n,0<t<\infty} \int \limits_{\R^n} | \Phi(t,y) \tilde{h}(x-y)|dy \leq \norm{\Phi(t, \cdot)}_{L^1(\R^n)} \norm{\tilde{h}}_{L^{\infty}(\R^n)}  \\
%	& \lesssim  \norm{\tilde{h}}_{L^{\infty}(\R^n)}.
%	\end{align*}
	In order to estimate the Carleson measure part of the $X^p$ norm, we observe that
	\begin{align*}
	\sup \limits_{x \in \R^n, t\in [\frac{R^2}{2},R^2]} | \nabla \tilde{w}(t,x)| &\leq \sup \limits_{x \in \R^n, t\in [\frac{R^2}{2},R^2]} \int \limits_{\R^n} | \nabla \Phi(t,y) {h}(x-y)|dy \\&\leq \sup \limits_{ t\in [\frac{R^2}{2},R^2]} \norm{\nabla \Phi(t, \cdot)}_{L^1(\R^n)} \norm{{h}}_{L^{\infty}(\R^n)}\\
	& \lesssim \frac{1}{\sqrt{R^2}} \norm{{h}}_{L^{\infty}(\R^n)},
	\end{align*}
	due to Lemma \ref{lemma2.1}. Using this estimate we get
	\begin{align*}
	\sup \limits_{z \in \R^n, R^2<\infty} R \Big( \fint \limits_{Q_R(z)} |\nabla \tilde{w}|^p dx dt  \Big)^{\frac{1}{p}} \leq \supzr R \norm{\nabla \tilde{w}}_{L^{\infty}(Q_R(z))} \lesssim R \frac{1}{R} \norm{{h}}_{L^{\infty}(\R^n)},
	\end{align*}
	which proves the Lemma.
\end{proof}

\begin{lemma} \label{lemmainhom}
It holds that  $\norm{\hat {w}}_{X^p} \lesssim \norm{{F}}_{Y^p}$.
\end{lemma}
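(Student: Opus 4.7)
The plan is to estimate the two constituents of $\|\hat w\|_{X^p} = \|\hat w\|_{L^\infty} + \|\hat w\|_{\dot X^p}$ separately, using in both cases that the Carleson measure $|F|^p\,dx\,dt$ controls $F$ on parabolic cylinders at every scale, combined with Gaussian decay of the heat kernel.

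For the $L^\infty$-bound, I would fix $(t,x)$ and split $\int_0^t ds$ at $t/2$. On the \emph{recent-past} piece $s\in[t/2,t]$, dyadically decompose $\tau = t-s \sim 2^{-k}t$; on the \emph{distant-past} piece $s\in[0,t/2]$, dyadically decompose $s\sim 2^{-k}t$ instead. On each dyadic slice, apply Hölder with exponents $(p',p)$ to the pointwise bound $|\nabla\Phi(\tau,\xi)|\lesssim \tau^{-(n+1)/2}e^{-c|\xi|^2/\tau}$, extracting a Gaussian weight on the $F$-side. The resulting weighted $L^p$-norm of $F$ is then controlled by further partitioning $|x-y|$ into dyadic annuli $|x-y|\sim 2^{j}\sqrt{\tau}$, covering each annulus $\times$ time-slice by $O(2^{jn})$ parabolic cylinders $Q_{R_k}$ at the \emph{natural} scale $R_k\sim\sqrt{s}$ (so that $[R_k^2/2,R_k^2]$ matches the dyadic time interval), and invoking the Carleson estimate $\int_{Q_{R_k}}|F|^p \leq R_k^{n+2-p}\|F\|_{Y^p}^p$. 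The sum over $j$ is absolutely convergent by the Gaussian, and the sum over $k$ yields the factor $2^{-k/2}$ in the distant-past case and $2^{-k(p-n-2)/(2p)}$ in the recent-past case, both summable precisely because $p>n+2$; this identifies exactly where the lower bound on $p$ enters.

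For the $\dot X^p$-bound, fix $Q_R(z)$ and split $F = F_{\mathrm{loc}} + F_{\mathrm{far}}$, where $F_{\mathrm{loc}} \coloneqq F\mathbf{1}_E$ with $E\coloneqq [R^2/8,R^2]\times B_{4R}(z)$ a parabolic enlargement of $Q_R(z)$. The corresponding $\hat w_{\mathrm{loc}}$ solves the heat equation on $\R\times\R^n$ with source $\nabla\cdot F_{\mathrm{loc}}$, so parabolic Calderón--Zygmund / maximal $L^p$-regularity gives
\[
\|\nabla\hat w_{\mathrm{loc}}\|_{L^p(\R\times\R^n)} \lesssim \|F_{\mathrm{loc}}\|_{L^p(\R\times\R^n)} = \|F\|_{L^p(E)}.
\]
Since $E$ can be covered by $O(1)$ cylinders of the form $Q_{R'}(z')$ with $R'\sim R$ (a few spatial copies of $Q_R(z')$, together with $Q_{R/\sqrt2}(z)$ and $Q_{R/2}(z)$ for the earlier time slabs), the Carleson condition yields $\|F\|_{L^p(E)} \lesssim R^{(n+2)/p-1}\|F\|_{Y^p}$, which after normalization by $R|Q_R|^{-1/p}$ is exactly $\|F\|_{Y^p}$. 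For the far piece, the support of $F_{\mathrm{far}}$ is separated from $Q_R(z)$ in parabolic distance by a fixed multiple of $R$, so $|\nabla^2\Phi(t-s,x-y)|$ enjoys uniform Gaussian off-diagonal decay at scale $R$. Dyadically decomposing the source region (in $s$ for the past part $s<R^2/8$, in $|y-z|$ for the spatially far part) and repeating the Hölder--Carleson argument of the first step on each slice, one obtains the pointwise bound $\|\nabla\hat w_{\mathrm{far}}\|_{L^\infty(Q_R)} \lesssim R^{-1}\|F\|_{Y^p}$, which trivially gives the desired seminorm estimate.

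The main obstacle is the careful bookkeeping of parabolic scales: because $Q_R(z)$ is defined with its time interval tethered to $[R^2/2,R^2]$, every region not naturally of this form must be dyadically broken into pieces each matching a scale to which the Carleson condition applies. Tracking the powers of $R$ and $2^k$ at each step is essential to see that the $R$-exponents collapse to the scale-invariant value and that the $2^k$-summation converges, with the recent-past slice in Step 1 being the delicate one that pins down the threshold $p>n+2$.
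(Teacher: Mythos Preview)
Your proposal is correct and follows essentially the same strategy as the paper. For the $\dot X^p$-seminorm the two arguments are nearly identical: both split $F$ into a local piece on an enlarged cylinder (handled by parabolic maximal $L^p$-regularity plus a finite covering of the enlargement by admissible $Q_{R'}$'s) and a far piece (handled pointwise via the off-diagonal decay of $|\nabla^2\Phi|$, reducing to the $L^\infty$ argument). The only organizational difference is in the $L^\infty$-bound. The paper splits the space-time integral directly into the diagonal cylinder $Q_{\sqrt t}(x)$ and its complement: on the diagonal a single H\"older inequality suffices, with $\nabla\Phi\in L^{p'}_{\mathrm{loc}}$ in space-time being exactly the condition $p>n+2$; on the complement a lattice covering in space together with a dyadic-in-$s$ decomposition yields the summable factor $2^{-m/2}$. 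Your recent-past/distant-past split with separate dyadic decompositions in $\tau=t-s$ and in $s$ reaches the same conclusion---your distant-past piece is the paper's off-diagonal part, and your recent-past sum over $k$ reproduces (in disguise) the paper's single diagonal H\"older estimate---but with somewhat more bookkeeping. Both routes are valid; the paper's is a little more economical.
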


\begin{proof}\textsc{.}
	We start with the bound on the $L^{\infty}$-norm of $\hat{w}$. We set $R=\sqrt t$ and split	the space-time integral into a diagonal and an off-diagonal part,
	\begin{align*}
	|\hat{w}(t,x)| &\leq \big| \int \limits_{Q_R(x)} \nabla \Phi(t-s,x-y)\cdot {F}(s,y)dyds \big|\\
	&\qquad + \big| \int  \limits_{[0,R^2]\times \R^n \setminus Q_R(x)} \nabla \Phi(t-s,x-y) \cdot {F}(s,y)dyds\big|\\
	& \eqqcolon A+B.
	\end{align*}
	
To bound the diagonal part, we use	 Hölder's inequality and get 
\[
A \leq \norm{\nabla \Phi}_{L^q([0, \frac{R^2}{2}]\times B_R(0))} \norm{{F}}_{L^p(Q_R(x))},
\]
for any H\"older conjugates $p$ and $q$.	We have to choose  $q$ small enough such that the $L^q$-norm of $\nabla \Phi$ is finite. From Lemma \ref{lemma2.1} we get
	\begin{align*}
	\Big( \int \limits_0^{\frac{R^2}{2}}  \int \limits_{B_R(0)} | \nabla \Phi|^qdxds\Big)^{\frac{1}{q}} \lesssim \big( \int \limits_0^{\frac{R^2}{2}} t^{-\frac{nq}{2}- \frac{q}{2}+ \frac{n}{2}}dt\big)^{\frac{1}{q}}.
	\end{align*}
	The right-hand side is finite if and only if $-\frac{nq}{2}- \frac{q}{2}+ \frac{n}{2} >-1$, which is equivalent to  requiring that $p >n+2$, as in the assumption of  Theorem \ref{thm1}. We evaluate the integral on the right-hand side and obtain for the diagonal part of $\hat w$ that
	\begin{align*}
	A \lesssim R^{1-\frac{n+2}{p}}\norm{ {F}}_{L^p(Q_R(x))} \lesssim \supzr R \big( \fint \limits_{Q_R(z)} | {F}|^p\big)^{\frac{1}{p}} \leq \norm{ {F}}_{Y^p}.
	\end{align*}
	
	Let us now consider the off-diagonal term $B$. Applying elementary arguments, we observe
	\begin{equation}\label{100}
	B \lesssim \int \limits_{[0,R^2]\times \R^n} \frac{1}{R^{n+1}}e^{-c\frac{|x-y|}{R}}|{F}(s,y)|dyds.
	\end{equation}
		In order to control the term on the right by the Carleson measure expression which defines the $Y^p$ norm, we have to invoke a covering argument. Using the triangle inequality and the fact that $\sum \limits_{\tilde{x}\in R \cdot\Z^n} e^{-c\frac{|x-\tilde{x}|}{R}}$ is controlled by a constant only depending on the dimension $n$, we notice that
	\begin{align*}
	B& \leq \sum \limits_{m=0}^{\infty} \sum \limits_{\tilde{x} \in R\cdot \Z^n} \int \limits_{R^2\cdot 2^{-(m+1)}}^{R^2 \cdot 2^{-m}} \int \limits_{B_R(\tilde{x})} e^{-c\frac{|x-y|}{R}} \frac{1}{R^{n+1}}| {F}(s,y)| dyds \\
	& \lesssim \sum \limits_{m=0}^{\infty} \sum \limits_{\tilde{x} \in R \cdot \Z^n} e^{-\frac{|x-\tilde{x}|}{R}}\int \limits_{R^2\cdot 2^{-(m+1)}}^{R^2 \cdot 2^{-m}} \int \limits_{B_R(\tilde{x})} \frac{1}{R^{n+1}}| {F}(s,x-z)|dzds\\& \lesssim
	\sum \limits_{m=0}^{\infty} \sup \limits_{\tilde{x} \in R \cdot \Z^n} \int \limits_{R^2\cdot 2^{-(m+1)}}^{R^2 \cdot 2^{-m}} \int \limits_{B_R(\tilde{x})} \frac{1}{R^{n+1}}| {F}(s,y)|dyds.
	\end{align*}
	Now we claim, that there exists a constant $ 0 < \gamma <1$ independent of $m$, such that 
	\begin{equation}\label{13}
	\int \limits_{R^2\cdot 2^{-(m+1)}}^{R^2 \cdot 2^{-m}} \int \limits_{B_R(\tilde{x})} \frac{1}{R^{n+1}}| {F}(s,y)|dyds \lesssim \gamma^m \norm{{F}}_{Y^p}.
	\end{equation}
This estimate directly implies that $B \lesssim \norm{{F}}_{Y^p}$ as a conclusion from the geometric series' convergence, which in turn establishes the control of the $L^{\infty}$ norm as desired.
	
	To prove the claim in \eqref{13}, we have to refine the spatial covering. Indeed, we cover the set $(R^2 \cdot 2^{-(m+1)}, R^2 \cdot 2^{-m})\times B_R(\tilde{x} )$ by about $2^{\frac{mn}{2}}$ many cylinders $Q_m(z)$ of the form $Q_m(z) \coloneqq (R^2 \cdot 2^{-(m+1)}, R^2 \cdot 2^{-m})\times B_{R\cdot 2^{-\frac{m}{2}}}(z)$. We now obtain 
	\begin{align*}	
	\int \limits_{R^2\cdot 2^{-(m+1)}}^{R^2\cdot 2^{-m}} \int \limits_{B_R(\tilde{x})}\frac{1}{R^{n+1}}| {F}|dyds  &\lesssim 2^{\frac{nm}{2}} \frac1{R^{n+1}}\sup \limits_z
 \norm{ {F}}_{L^1(Q_m(z))}\\
	&\lesssim  2^{\frac{nm}{2}}  \frac{1}{R^{n+1}} \big( R \cdot 2^{-\frac{m}{2}} \big)^{n+2}\frac1{R2^{-\frac{m}2}} \norm{ {F}}_{Y^1} \\
	&=2^{-\frac{m}{2}}  \norm{ {F}}_{Y^1}.	\end{align*}
	Since $\|F\|_{Y^1}\le \|F\|_{Y^p}$ by Jensen's inequality, we see that $\gamma = 2^{-\frac{1}{2}}$ is a valid constant.
	
	It remains to estimate the Carleson measure part of the $X^p$ norm. Again, we consider separately the diagonal and the off-diagonal contribution, this time, however, by distinguishing the two cases $\supp( {F})\subset [0,\infty)\times \R^n \setminus Q_R(z)$ and $\supp( {F})\subseteq \check{Q}_R(z) \coloneqq (\frac{R^2}{4},R^2)\times B_{2R}(z)$. The general case is obtained by a standard cut-off procedure via the triangle inequality.

	\textit{Case 1: We assume $\supp( {F})\subset [0,\infty)\times \R^n \setminus Q_R(z)$.}\\
	Then we get for the absolute value of $\nabla \hat{w}(x,t)$ with $R^2=t$:
	\begin{align*}
	|\nabla \hat{w}(t,x)|& \leq \intrn \int \limits_0^{R^2}|\nabla^2\Phi(t-s,x-y)| | {F}(s,y)|dyds \\
	&= \int \limits_{ [0,R^2]\times \R^n \setminus Q_R(x)} |\nabla^2\Phi(t-s,x-y)| | {F}(s,y)|dyds\\
	&\lesssim \int \limits_{ [0,R^2]\times \R^n } \frac{1}{R^{n+2}}e^{-c\frac{|x-y|}{R}}| {F}(s,y)|dyds.
	\end{align*}
Up to a factor $1/R$, the term on the right-hand side is precisely the term that we hat to bound in our previous argument for $B$, see \eqref{100}. We thus find
	\begin{align}
	|\nabla \hat{w}(t,x)| \lesssim	\frac{1}{R}\norm{ {F}}_{Y^p},
	\end{align}
and averaging over the cylinder $Q_R(z)$ gives
	\begin{align*}
			\sup \limits_{z \in \R^n, R^2<T} R \Big( \fint \limits_{Q_R(z)} |\grad \hat {w}|^p dx dt  \Big)^{\frac{1}{p}} \lesssim R \cdot \norm{\nabla \hat{w}}_{L^{\infty}} \lesssim  \norm{ {F}}_{Y^p},
	\end{align*}
	as desired.
	
	\textit{Case 2: We assume $\supp({F})\subseteq \check{Q}_R(z)  $}.\\
Our argumentation for this case is based on the maximal regularity estimate for the heat equation with forcing in divergence form,
	\begin{align}\label{abs}
		\norm{\nabla \hat {w}}_{L^p((0,\infty)\times \R^n)} \lesssim \norm{ {F}}_{L^p((0,\infty) \times \R^n)}.
	\end{align}
Restriction on the support of the forcing, we get $\norm{\nabla \hat{w}}_{L^p(Q_R(z))} \lesssim \norm{ {F}}_{ L^p(\check{Q}_R(z))}$. We can cover $\check{Q}_R(z)$ by $Q_R(z)\cup Q_{2R}(z) \cup Q_{\frac{R}{\sqrt{2}}}(z)$ and thus we obtain
	\begin{align*}
	\MoveEqLeft[5]
		R^{1-\frac{n+2}{p}}\norm{\nabla \hat{w}}_{L^p(Q_R(z))} \leq R^{1-\frac{n+2}{p}}\big( \norm{ {F}}_{L^p(Q_R(z))} + \norm{ {F}}_{L^p(Q_{2R}(z))+ \norm{ {F}}_{L^p(Q_{R/\sqrt{2}}(z))}}\big)\\
		 &\lesssim R^{1-\frac{n+2}{p}} \norm{ {F}}_{L^p(Q_R(z))} + (2R)^{1-\frac{n+2}{p}} \norm{ {F}}_{L^p(Q_{2R}(z))} + \big(\frac{R}{\sqrt{2}}\big)^{1-\frac{n+2}{p}} \norm{ {F}}_{L^p(Q_{R/\sqrt{2}}(z))} \\
		 & \lesssim \norm{ {F}}_{Y^p}. 
	\end{align*}
	Maximizing in $R$ and $z$ on the left-hand side yields the missing estimate.
\end{proof}

\section{The nonlinear problem}\label{chapter3}
% !TeX spellcheck = en_GB 
In this section we want to prove Theorems \ref{thm1}, \ref{thm2}, and \ref{thm3}. 
Our first concern is the well-posedness of the system \eqref{allgemein} under the assumption \eqref{matrixbedingung} on the nonlinearity, which we derive   by a fixed point argument. To apply this argument we need the following lemma.
\begin{lemma}\label{lemmakontraktion}
It holds that
	\begin{align}\label{abschaetzung}
		\norm{F(v,\nabla v)-F(w,\nabla w)}_{Y^p} \lesssim d \max \{\norm{v}_{X^p}^{\mu},\norm{w}_{X^p}^{\mu}, \norm{v}_{X^p}^{\nu+1}, \norm{w}_{X^p}^{\nu+1} \} \norm{v-w}_{X^p}
	\end{align}
	and
\begin{align}\label{5}
	\norm{F(v,\nabla v)}_{Y^p} \lesssim d \norm{v}_{X^p}^{\mu+1}.
\end{align}
\end{lemma}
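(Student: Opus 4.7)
The plan is to unpack the definition $F_i(w,\nabla w)=\sum_j A_{ij}(w)\nabla w_j$ and estimate both bounds pointwise before passing to the $Y^p$ seminorm, so that the structural assumption \eqref{matrixbedingung} on $A$ does all the work and the $X^p$ norm naturally appears through its $L^\infty$ part and its Carleson gradient part.

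For the simpler estimate \eqref{5}, I would first use the growth bound $|A_{ij}(w)|\lesssim |w|^{\mu}\leq \norm{v}_{L^{\infty}}^{\mu}$ together with the triangle inequality to get the pointwise estimate
\[
|F_i(v,\nabla v)|\;\lesssim\; \norm{v}_{L^{\infty}}^{\mu}\sum_{j=1}^{d}|\nabla v_j|.
\]
Then for any cylinder $Q_R(z)$, Minkowski's inequality in $L^p(Q_R(z))$ gives
\[
R\Bigl(\fint_{Q_R(z)}|F_i|^p\Bigr)^{1/p}\;\lesssim\; \norm{v}_{L^{\infty}}^{\mu}\sum_{j=1}^{d}R\Bigl(\fint_{Q_R(z)}|\nabla v_j|^p\Bigr)^{1/p}\;\leq\; d\,\norm{v}_{L^{\infty}}^{\mu}\norm{v}_{\dot X^p}.
\]
Taking the supremum over $z$ and $R$ and over $i$, and using $\norm{v}_{L^{\infty}}^{\mu}\norm{v}_{\dot X^p}\leq \norm{v}_{X^p}^{\mu+1}$, yields $\norm{F(v,\nabla v)}_{Y^p}\lesssim d\norm{v}_{X^p}^{\mu+1}$.

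For \eqref{abschaetzung}, I would write the telescoping decomposition
\[
F_i(v,\nabla v)-F_i(w,\nabla w)\;=\;\sum_{j=1}^{d}\bigl(A_{ij}(v)-A_{ij}(w)\bigr)\nabla v_j\;+\;\sum_{j=1}^{d}A_{ij}(w)\bigl(\nabla v_j-\nabla w_j\bigr),
\]
and apply \eqref{matrixbedingung} to each piece. The first sum is bounded pointwise by $\max\{\norm{v}_{L^{\infty}}^{\nu},\norm{w}_{L^{\infty}}^{\nu}\}\,\norm{v-w}_{L^{\infty}}\sum_j |\nabla v_j|$ and the second by $\norm{w}_{L^{\infty}}^{\mu}\sum_j|\nabla(v_j-w_j)|$. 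Repeating the $L^p(Q_R(z))$-Minkowski argument from the previous paragraph, each sum produces a factor $d$ and a Carleson seminorm term, giving
\[
\norm{F(v,\nabla v)-F(w,\nabla w)}_{Y^p}\;\lesssim\; d\,\max\{\norm{v}_{X^p}^{\nu},\norm{w}_{X^p}^{\nu}\}\,\norm{v-w}_{L^{\infty}}\,\max\{\norm{v}_{\dot X^p},\norm{w}_{\dot X^p}\}\;+\;d\,\norm{w}_{X^p}^{\mu}\norm{v-w}_{\dot X^p}.
\]
Absorbing the factor $\norm{v}_{\dot X^p}$ or $\norm{w}_{\dot X^p}$ into $\norm{\cdot}_{X^p}^{\nu+1}$ and bounding both $\norm{v-w}_{L^{\infty}}$ and $\norm{v-w}_{\dot X^p}$ by $\norm{v-w}_{X^p}$ delivers exactly \eqref{abschaetzung}.

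There is no real analytic obstacle here: the whole argument is pointwise, and the only bookkeeping issue is that each $j$-summation introduces a factor $d$ (which is harmless because the smallness condition on the initial datum in Theorem \ref{thm1} absorbs it), while the split $\norm{v}_{X^p}=\norm{v}_{L^{\infty}}+\norm{v}_{\dot X^p}$ is precisely designed so that the pointwise bound on $A$ interacts cleanly with the Carleson part.
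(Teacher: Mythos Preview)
Your proof is correct and follows essentially the same approach as the paper: the same telescoping decomposition of $F_i(v,\nabla v)-F_i(w,\nabla w)$, the pointwise application of \eqref{matrixbedingung}, and the identification $\|\nabla w_j\|_{Y^p}=\|w_j\|_{\dot X^p}$ to convert the Carleson gradient part into the $X^p$ norm. The only cosmetic difference is that you telescope as $(A(v)-A(w))\nabla v + A(w)\nabla(v-w)$ while the paper uses $A(v)\nabla(v-w) + (A(v)-A(w))\nabla w$, which is immaterial since the final bound is symmetric in $v$ and $w$.
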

\begin{proof}\textsc{.}
	Since $\norm{F(v,\nabla v)}_{Y^p}$ is defined as the maximum of $\norm{F_i(v,\nabla v)}_{Y^p}$, it suffices to show the statements of the lemma for some component $F_i$ of $F$. We restrict our attention to the proof of the Lipschitz estimate \eqref{abschaetzung}. The argument for \eqref{5} is similar and even shorter.
	By the definition of the nonlinearity $F_i$ and an application of the triangle inequality, it holds that
	\begin{align*}	\MoveEqLeft[5]
		\norm{F_i(v,\nabla v)-F_i(w,\nabla w)}_{Y^p} = \norm{ \sum \limits_{j=1}^d A_{ij}(v)\nabla v_j - \sum \limits_{j=1}^d A_{ij}(w)\nabla w_j}_{Y^p}\\
		&\leq \sum \limits_{i=1}^d  \norm{A_{ij}(v)\left(\nabla v_j -\nabla w_j\right)}_{Y^p} + \sum \limits_{i=1}^d \norm{\left(A_{ij}(v)-A_{ij}(w)\right) \nabla w_j}_{Y^p}.
	\end{align*}
	We make now use of the assumptions on the reaction matrix $A$ in \eqref{matrixbedingung} and the fact that $\|\grad w\|_{Y^p} = \|w\|_{\dot X^p}$ to estimate
			\begin{align*}	\MoveEqLeft[5]
		\norm{F_i(v,\nabla v)-F_i(w,\nabla w)}_{Y^p} \\
		&\leq  \sum \limits_{i=1}^d   \norm{A_{ij}(v)}_{L^{\infty}} \norm{v_j-w_j}_{X^p} +  \sum \limits_{i=1}^d \norm{A_{ij}(v)-A_{ij}(w)}_{L^{\infty}} \norm{v_j}_{X^p}\\
		& \lesssim d   \norm{v}_{L^{\infty}}^{\mu}   \norm{v-w}_{X^p} + d \max \left\{ \norm{v}_{L^{\infty} }^{\nu}, \norm{w}_{L^{\infty} }^{\nu} \right\} \norm{v-w}_{L^{\infty} }   \norm{v}_{X^p} \\
		& \lesssim  d \max \Big\{\norm{v}_{X^p}^{\mu},  \norm{v}_{X^p}^{\nu+1}, \norm{w}_{X^p}^{\nu+1} \Big\} \norm{v-w}_{X^p}.
	\end{align*}
	This proves \eqref{abschaetzung}. 
	\end{proof}\\
%\begin{remark*}
%	If we consider the special case \eqref{bewiesenessystem}, where $F_i(w,\nabla w)=\sum \limits_{j=1, j \neq i}^d \alpha_{ij} (w_j\nabla w_i - w_i \nabla w_j)$, Lemma \ref{lemmakontraktion} yields 
%	\begin{align}\label{25}
%		\norm{F[v, \nabla v]-F(w,\nabla w)}_{Y^p} \lesssim d \max \{\norm{v}_{X^p}, \norm{w}_{X^p} \} \norm{v-w}_{X^p}
%	\end{align}
%	and
%	\begin{align}\label{26}
%		\norm{F(w,\nabla w)}_{Y^p} \lesssim d \norm{w}_{X^p}.
%	\end{align}
%\end{remark*}
%

We now have all prerequisites to prove Theorem \ref{thm1}.

\begin{proof}\textsc{ of Theorem \ref{thm1}.} 
Let $w\in X^p$ be given, and let $T[h,w]$ be the solution to the linear problem (\ref{heateq}) with  inhomogeneity $\nabla \cdot F(w,\nabla w)$ and initial data $h$. By Proposition \ref{proposition} we obtain the estimate $\norm{T[w,h]}_{X^p} \lesssim \norm{h}_{L^{\infty}} + \norm{F(w,\nabla w)}_{Y^p}$. Applying Lemma \ref{lemmakontraktion} and using the assumptions on $h$ we furthermore have ${ \norm{h}_{L^{\infty}} + \norm{F(w, \nabla w)}_{Y^p} \lesssim \delta + d \norm{w}^{\mu+1}_{X^p}}$, and thus, combining both estimates, we get the following bound on the solution of the linear problem
\[
\norm{T[h,w]}_{X^p} \leq C \big( \delta + d \norm{w}^{\mu+1}_{X^p}\big)
\]
for some constant $C$ that we keep fixed for a moment. In order to define a contraction map, we define $\delta_0 = \left(\frac{1}{d (2C)^{\mu+1}}\right)^{\frac{1}{\mu}}$, to the effect that 
	\begin{align*}
		\norm{T[h,w]}_{X^p} \leq C \big( \delta + d (2C\delta)^{\mu+1} \big)\leq 2C\delta
	\end{align*}
for any  $w \in B_{2C\delta}(0)\subseteq X^p$, provided that $\delta \leq \delta_0$. Hence, for every such $\delta$ and every $h$ fixed,  the function $T(h, \cdot)$ maps the set $B_{2C\delta}(0)\subseteq X^p$ into itself.

Furthermore, by a similar argument, given $w_1$ and $w_2$, the linearity and Lemma \ref{lemmakontraktion} yield
	\begin{align*}
		\norm{T[w_1,h]-T[w_2,h]}_{X^p} &\leq \tilde C d \max \{\norm{w_1}_{X^p}^{\mu},\norm{w_2}_{X^p}^{\mu}, \norm{w_1}_{X^p}^{\nu+1}, \norm{w_2}_{X^p}^{\nu+1} \} \norm{w_1-w_2}_{X^p},
	\end{align*}
	for some constant $\tilde C$. Choosing  $\delta_0$ even smaller---if necessary---, we thus find the contraction estimate
	%$\delta <\delta_2 \coloneq \min \{ \left(\frac{1}{d2^{\mu}C^{\mu +1}} \right)^{\frac{1}{\mu}}, \left(\frac{1}{d2^{\nu+1}C^{\nu +2}} \right)^{\frac{1}{\nu+1}} \}$, this estimate reveals
	\begin{align*}
		\norm{T[w_1,h]-T[w_2,h]}_{X^p} \leq \theta \norm{w_1-w_2}_{X^p},
	\end{align*}
	for any ${{w_1, w_2\in  B_{2C\delta}(0)\subseteq X^p}}$ and some  $\theta <1$ fixed. 
	
An application of Banach's fixed point theorem thus provides a unique solution $w^* $ in $ B_{2C\delta}(0)\subseteq X^p$ to the equation $T[h,w^*]=w^*$, which is nothing but (\ref{allgemein}).
As a by-product, we also have the stability estimate \eqref{12}.
\end{proof}
\\

The idea how to prove the regularity of the solution was introduced  in \cite{1023071971426,angenent_1990} and is commonly referred to as Angenent's trick.

\begin{proof}\textsc{ of Theorem \ref{thm2}.}
	To show that the dependence of the solution  on the initial datum is of class $C^m$, $C^{\infty}$ or $C^{\omega}$, we consider the operator $L: L^{\infty}(\T^n, \R^d)\times X^p \rightarrow X^p$ defined by ${L[h,w] = w - T[h,w]}$, where $T$ is the fixed point map introduced in the proof of Theorem \ref{thm1} above.
	Defined  on $B_{\delta}(0)\times B_{2C\delta}(0) \subseteq L^{\infty}(\T^n,\R^d)\times X^p$, this map $T$ is of the same differentiability class as the nonlinearity $F(w,\nabla w)$ through $A(w)$, and so is the operator $L$ by definition. Indeed, if, for instance, $A$ is $C^1$, we notice that
	\begin{align*}
\MoveEqLeft	\left|F_i(w_1,\grad w_1) - F_i( w_2,\grad  w_2) - D_wF_i(w_2,\grad w_2) (w_1-w_2)- D_{\grad w}F_i(w_2,\grad w_2)(\grad w_1-\grad w_2)\right|\\
& \le \sum_{j} |A_{ij}(w_1) - A_{ij}(w_2) - A'_{ij}(w_2)(w_1-w_2) ||\grad w_j| \\
&\quad + \sum_j |A_{ij}'(w_2)||w_1-w_2||\grad w_1-\grad w_2|,
	\end{align*}
	and the right-hand side is a $o(\|w_1-w_2\|_{X^p})$ term, and the derivative of the fixed-point map $T[h,w]$ with respect to $w$ is given by the solution of the heat equation with with inhomogeneities $\div(D_wF_i(w,\grad w) v+ D_{\grad w}F_i(w,\grad w)\grad v)$.
	Next we observe, that $L[0,0]=0$ holds and $D_wL[0,0]= id$ is invertible.
We are thus in the position to apply the (analytic) implicit function theorem (see for example \cite{Deimling85}) to deduce the existence of balls ${B_{\hat{\delta}}(0) \subseteq L^{\infty}(\T^n,\R^d)}$ and $B_{\varepsilon}(0)\subseteq X^p$ and of a function ${S:L^{\infty}(\T^n, \R^d) \supseteq B_{\hat{\delta}}(0) \rightarrow B_{\varepsilon}(0)\subseteq X^p}$ of class $C^m$, $C^\infty$  or $C^\omega$ with $S[0]=0$ and $L[h,S[h]]=0$. For $\tilde{\delta} = \min(\delta, \hat{\delta})$ and $\tilde{\varepsilon}=\min(\varepsilon, \varepsilon_0)$ we obtain, due to the definition of $L$, a unique solution $w^* \in B_{\tilde{\varepsilon}}(0)\subseteq X^p$ that depends of class $C^m$, class $C^\infty$ or analytically on the initial data $h \in B_{\tilde{\delta}}(0) \subseteq L^{\infty}(\T^n,\R^d)$.

	Finally, we show the regularity of the solution $w^*$. For this purpose, we define a translation operator $\Psi_{s,a}:\R\times\R^n\rightarrow \R\times\R^n$ by
	\begin{align*}
	\Psi_{s,a}(t,x) \coloneqq (st,x+t^{\frac{1}2}a)\ \ \ \text{ and set } \ \ \ w^*_{s,a} \coloneqq w^* \circ \Psi_{s,a}.
	\end{align*}
	We notice that $w^*_{s,a}$ solves the equation
	\begin{align*}
		\partial_t w^*_{s,a} -\Delta w^*_{s,a} =\nabla \cdot F_{s,a}(w^*_{s,a}, \nabla w^*_{s,a}),
	\end{align*}
	where
	\begin{align*}
		F_{s,a}(w,\nabla w) \coloneqq s F(w,\nabla w)+(s-1)\nabla w + \frac{1}{2} a{ t^{-\frac{1}{2}}} w.
	\end{align*}
	By definition, it holds that $F_{1,0}(w, \nabla w)= F(w, \nabla w)$. Let $T_{s,a}[h,w]$ denote the solution to the linear problem with inhomogeneity $\nabla \cdot F_{s,a}(w,\grad w)$ and initial data $h$. Since $\|a{ t^{-\frac{1}{2}}}w\|_{Y^p} \lesssim \norm{w}_{L^{\infty}}$, Lemma \ref{lemmakontraktion} holds true for $F_{s,a}$ as well. We set, similarly as above, $L_{s,a}[h,w] = w - T_{s,a}[h,w]$. Again it holds, that $L_{1,0}[0,0]=0$ and $D_wL_{1,0}[0,0]=id$.
	 Another application of the implicit function theorem thus yields the existence of two numbers $\lambda>0$ and $\delta_0>0$ as well as a function  $S_{s,a}[h]=S[s,a,h]$ from  $B_{\lambda}(1) \times B_{\lambda}(0)\times B_{\delta}(0) \subseteq \R \times \R^n \times L^{\infty}(\T^n,\R^d)$ to $B_{2C\delta}(0)\subseteq X^p$ of class $C^m$, $C^\infty$ or $C^\omega$ for every $\delta \leq \delta_0$. The function $S_{s,a}$ satisfies $L_{s,a}[h,S_{s,a}[h]]=0$ and thus $S_{s,a}[h]=T_{s,a}[h,S_{s,a}[h]]$.
	From the above uniqueness results we deduce that $S_{s,a}[h]= S[h] \circ \Psi_{s,a}$. Moreover, since ${S[h](0,\cdot) = h = S_{s,a}[h](0,\cdot)}$ and $L_{s,a}[h,S[h]\circ \Psi_{s,a}]=0$, it holds that the dependence of $S[h]\circ \Psi_{s,a}(t,x)$  on the parameters $a$ and $s$ is of class $C^m$, $C^\infty$ or $C^\omega$ in a small neighbourhood of $(1,0) \in \R\times \R^n$.
	For finite $t$ we can calculate the derivatives,
	\begin{align*}
	\partial_s^k \partial_a^{\beta}\big|_{(s,a)=(1,0)}S[h]\circ \Psi_{s,a}(t,x) = t^{k+\frac{|\beta|}2}\partial_t^k\partial_x^{\beta}w(t,x).
	\end{align*}
	This shows, that $S[h]$ and thereby $w^*$ as well is of class $C^m$, class $C^\infty$ or analytic in space and time for every $x \in \T^n$ and every $0<t<\infty$. 	Since $\norm{S[h]\circ \Psi_{s,a}}_{L^{\infty} } \leq \norm{S[h]}_{X^p} \lesssim \norm{h}_{L^{\infty} }$, we deduce \eqref{7}.
	
	To cover the analytic case, it only remains to recall the elementary fact that we can estimate arbitrary derivatives of an analytic function $f$ locally by $		|f^{(j)}(y)| \leq C \frac{j!}{\Gamma^j}\norm{f}_{L^{\infty}} $ for some positive reals $C$ and $  \Gamma	$. This concludes the proof of Theorem \ref{thm2}.
	\end{proof}\\

We finally turn to the proof of  Theorem \ref{thm3}. Thanks to the results obtained so far for the general systems, it is enough to show that solutions to \eqref{bewiesenessystem} satisfy the partition of unity condition \eqref{bedingungsummeW}. For this purpose, it is convenient to  truncate the nonlinearities. Inspired by   \cite{articlepietschmann}, we consider
\begin{align}\label{modifiziertesproblem}
\begin{cases}
\partial_t w_i - \Delta w_i = \nabla \cdot \hat{F}_i(w,\grad w)  &  \text{ in } \left(0,\infty\right) \times \R^n,\\
w_i(0,\cdot)=h_i & \text{ in } \R^n,
\end{cases} \qquad & i = 1,\dots d,
\end{align}
with nonlinearities
\[
\hat{F}_i (w,\grad w) \coloneqq     \sum \limits_{i=1, j \neq i}^d \alpha_{ij} (\hat{w}_j\nabla w_i - \hat{w}_i \nabla w_j) ,
\]
where $\hat{w}_i$ is obtained from   $w_i$ by restriction to the range $[0,\delta]$, i.e., $\hat{w}_i \coloneqq \max \left(0, \min(\delta,w_i)\right)$. We have to show that solutions to the truncated problem satisfy \eqref{bedingungsummeW} and that $\hat w_i = w_i$ to deduce statement of Theorem \ref{thm3}.

\begin{proof}\textsc{ of Theorem \ref{thm3}.} The general well-posedness result of Theorem \ref{thm1} applies to the modified problem \eqref{modifiziertesproblem}, and we see that  $\delta_0 $ has to be chosen much smaller than $ 1/d$ by a closer inspection of the proof. We denote the unique solution to \eqref{modifiziertesproblem} by $w^*$.

Our goal is to show, that $w^*$ fulfils the partition of unity condition (\ref{bedingungsummeW}). Therefore we start by adding up all $d$ equations of (\ref{modifiziertesproblem}). Due to the symmetry condition $K_{ij}=K_{ji}$ imposed in \eqref{2}, which is inherited by the $\alpha_{ij}$'s, this leads to considering the homogeneous heat equation
\begin{align*}
\begin{cases}
\partial_t W - \Delta W = 0 &\text{ in } (0,\infty) \times \T^n,\\
W(0, \cdot)= \delta & \text{ in } \T^n,
\end{cases}
\end{align*}
for $W\coloneqq \sum \limits_{i=1,\dots,d} w_i^*$, which is solved by $W=\delta$.

To show, that the $w_i^*$'s stay nonnegative, we consider the negative parts of $w_i^*$, namely $w_i^{*-} \coloneqq \min (0,w_i^*)$. Multiplying the $i$th equation of (\ref{modifiziertesproblem}) by $w_i^{*-}$ and integrating over $\T^n$ leads to
\begin{align*}
\MoveEqLeft[6]
\inttn w_i^{*-} \partial_t w_i^* dx - \inttn w_i^{*-} \Delta w_i^*dx \\
&= \inttn w_i^{*-} \nabla \cdot \big( \sum \limits_{j=1, j \neq i}^d \alpha_{ij} \hat{w}_j^*\nabla w_i^* \big)dx - \inttn w_i^{*-} \nabla \cdot \big( \sum \limits_{j=1, j \neq i}^d \alpha_{ij} \hat{w}^*_i\nabla w_j^* \big)dx.
\end{align*}
By a multiple integration by parts, taking into account   that $\hat{w}^*_i \nabla w_i^{*-} = 0$ and $w_i^{*-} \hat{w}^*_i =0$, we derive the energy identity
\begin{align}\label{positiv}
0 =  \frac{1}{2} \frac{d}{dt} \norm{w_i^{*-}}_{L^2(\T^n)}^2+ \inttn |\nabla w_i^{*-}|^2 \big( 1 + \sum \limits_{j=1, j \neq i}^d \alpha_{ij}\hat{w}^*_j \big)dx.
\end{align}
Since $w^* \in B_{2C\delta}(0) \subseteq X^p$, we know $|w_j^*| \leq 2C\delta$ for every $j$ and therefore $\hat{w}^*_j \in [0, 2C\delta]$, where $2C\delta < \frac{1}{2(d-1)C}$. We can assume, that the positive constant $C$ is greater than one and thus, using $\alpha_{ij} \in [-1,1]$, we obtain
\begin{align*}
1 + \sum \limits_{j=1, j \neq i}^d \alpha_{ij}\hat{w}^*_j \geq 1 -\sum \limits_{j=1, j \neq i}^d \hat{w}^*_j > 1 - (d-1)\frac{1}{2(d-1)}=\frac{1}{2}.
\end{align*}
Hence the second term in (\ref{positiv}) is nonnegative. This provides that the $L^2$-norm of $w_i^{*-}$ decreases in time. Together with the fact $h_i = w_i^*(0,\cdot)$ is nonnegative for every $i=1,\dots,d$, we obtain $w_i^-=0$ and thus  $w_i^*\geq 0$ almost everywhere in $(0,\infty) \times \T^n$. 

We have thus seen that $w^*$ solves partition of unity condition \eqref{bedingungsummeW} almost everywhere in $(0,\infty) \times \T^n$, and thus $w^* = \hat w^*$ almost everywhere. It remains to note that thanks  to the regularity established in Theorem \ref{thm2} and the continuity of the nonlinearity $\hat{F}(w,\nabla w)$, the solution $w^*$ is continuous as well and this property expands to the whole domain $(0,\infty)\times \T^n$. 
\end{proof} 
\section*{Acknowledgement} 
This work is funded by the Deutsche Forschungsgemeinschaft (DFG, German Research Foundation) under Germany's Excellence Strategy EXC 2044 --390685587, Mathematics M\"unster: Dynamics--Geometry--Structure.

\bibliography{mybib}{}
\bibliographystyle{abbrv}

\newpage

\end{document}